 \tikzset{vertex/.style={fill,circle,inner sep=1.0pt}}
\newtheorem{theorem}{Theorem}[section]
\newtheorem{proposition}[theorem]{Proposition}
\newtheorem{corollary}[theorem]{Corollary}
\newtheorem{lemma}[theorem]{Lemma}
\newtheorem{claim}[theorem]{Claim}
\theoremstyle{definition}
\newtheorem{definition}[theorem]{Definition}
\newtheorem{remark}[theorem]{Remark}
\newtheorem{example}[theorem]{Example}
\newcommand{\poly}{\mathcal{Z}^*_M (\underline{K}, \underline{L})}
\newcommand{\polyp}{\mathcal{Z}^*_M (K, L)}
\newcommand{\pair}{(\underline{K}, \underline{L})}
\newcommand{\facet}[1]{\mathbf{F}_{#1}}
\newcommand{\dl}[2]{\mathrm{dl}_{#1}(#2)}
\newcommand{\polyind}{\mathcal{Z}^*_{I(G)} (I(H), I(H \setminus U))}
\newcommand{\polyindv}{\mathcal{Z}^*_{I(G)} (I(H), I(H \setminus \{v_0\}))}
\title[]{Shellability of Polyhedral Joins of Simplicial Complexes and Its Application to Graph Theory}
\author[]{Kengo Okura}
\address{Osaka Metropolitan University, 1-1 Gakuen-cho, Naka-ku, Sakai, Osaka 599-8531, Japan}
\email{okura.kengo.k35@kyoto-u.jp}
\keywords{polyhedral join, shellability, lexicographic product}
\subjclass[2020]{05E45, 05C76}
\begin{document}

\begin{abstract}
We investigate the shellability of the polyhedral join $\mathcal{Z}^*_M (K, L)$ of simplicial complexes $K, M$ and a subcomplex $L \subset K$.
We give sufficient conditions and necessary conditions on $(K, L)$ for $\mathcal{Z}^*_M (K, L)$ being shellable. In particular, we show that for some pairs $(K, L)$, $\mathcal{Z}^*_M (K, L)$ becomes shellable regardless of whether $M$ is shellable or not. 
Polyhedral joins can be applied to graph theory as the independence complex of a certain generalized version of lexicographic products of graphs which we define in this paper. 
The graph obtained from two graphs $G, H$ by attaching one copy of $H$ to each vertex of $G$ is a special case of this generalized lexicographic product and we give a result on the shellability of the independence complex of this graph by applying the above results.
\end{abstract}

\maketitle

\section{Introduction}
\label{introduction}
A {\it finite simple graph} $G$ is a pair $G=(V(G), E(G))$ of a finite set $V(G)$ and a set $E(G) \subset \{ e \subset V(G) \ |\ |e| = 2 \}$. We drop adjectives ``finite simple'' and call $G$ a {\it graph}. $v \in V(G)$ is called a {\it vertex} of $G$ and $e \in E(G)$ is called an {\it edge} of $G$. The {\it independence complex} of $G$ is an abstract simplicial complex $I(G)$ defined by
\begin{align*}
I(G) = \{ \sigma \subset V(G) \ |\ \{u,v\} \notin E(G) \text{ for any $u, v \in \sigma$ } \}.
\end{align*}
A simplex of $I(G)$ is called an {\it independent set} of $G$. 

A simplicial complex $K$ is {\it shellable} if its facets can be arranged in a linear order $F_1, F_2, \ldots , F_t$ (which we call a {\it shelling}) in such a way that the subcomplex $\left(\bigcup_{i=1}^{k-1} \langle F_i \rangle \right) \cap \langle F_k \rangle$ is pure and $(\dim F_k -1)$-dimensional for all $k=2, \ldots , t$. 
(Note that this is ``non-pure'' shellability defined by Bj{\"{o}}rner and Wachs \cite{BjornerWachs96, BjornerWachs97}.)
A graph is called {\it shellable} if its independence complex is shellable. 
The shellability (including {\it vertex decomposability}, which is one of the sufficient conditions for a graph being shellable) of graphs has been studied by many researchers, such as 
\cite{HibiHigashitaniKimuraOKeefe15, VantuylVillarreal08, VandermeulenVantuyl17, Woodroofe09}.
In this paper, we focus on the following result by Hibi, Higashitani, Kimura, and O'Keefe \cite{HibiHigashitaniKimuraOKeefe15}.
Here, a graph $G$ is called {\it well-covered} if every maximal independent set of $G$ has the same cardinality.

\begin{theorem}[Based on Hibi, Higashitani, Kimura, and O'Keefe {\cite[Theorem 1.1]{HibiHigashitaniKimuraOKeefe15}}]
\label{motivation}
Let $G$ be a graph on a vertex set $V(G)=\{u_1, \ldots, u_n\}$. Let $k_1, \ldots, k_n \geq 2$ be integers. Then the graph $G'$ obtained from $G$ by attaching the complete graph $K_{k_i}$ to $u_i$ for $i=1, \ldots, n$ is well-covered and shellable.
\end{theorem}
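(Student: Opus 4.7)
The plan is to establish the two conclusions separately, with well-coveredness by a direct count and shellability via vertex decomposability.

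For well-coveredness, observe that every maximal independent set $\sigma$ of $G'$ satisfies $|\sigma \cap V(K_{k_i})| = 1$ for each $i$: at most one because $V(K_{k_i})$ is a clique in $G'$, and at least one because otherwise any new vertex $w \in V(K_{k_i}) \setminus \{u_i\}$, whose only $G'$-neighbours lie inside $V(K_{k_i})$, could be appended to $\sigma$, contradicting maximality. Hence $|\sigma| = n$ for every maximal independent set, so $G'$ is well-covered.

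For shellability, I would prove the stronger statement that $I(G')$ is vertex decomposable, by induction on $n$. The base case $n = 1$ gives $G' = K_{k_1}$, whose independence complex is the discrete complex on $k_1$ points, easily checked to be vertex decomposable. For the inductive step, note that any new vertex $w \in V(K_{k_i}) \setminus \{u_i\}$ is simplicial in $G'$ since $N_{G'}(w) = V(K_{k_i}) \setminus \{w\}$ is a clique. A standard lemma of Woodroofe then shows that any neighbour of a simplicial vertex is a shedding vertex of the associated independence complex, so in particular $u_n$ is a shedding vertex of $I(G')$, and it suffices to show that both the link $\mathrm{lk}_{I(G')}(u_n) = I(G' \setminus N_{G'}[u_n])$ and the deletion $\mathrm{del}_{I(G')}(u_n) = I(G' \setminus u_n)$ are vertex decomposable.

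The link is the independence complex of the whiskering of $G \setminus N_G[u_n]$ with the inherited $k_i$'s, a smaller instance of the theorem to which the inductive hypothesis applies (in the degenerate case $V(G) = N_G[u_n]$ the link is the $(-1)$-complex, trivially vertex decomposable). For the deletion, $G' \setminus u_n$ decomposes as a disjoint union of $(G \setminus u_n)'$, built from $G \setminus u_n$ using $k_1, \ldots, k_{n-1}$, and the residual clique $K_{k_n - 1}$ on the new vertices $w_n^{(1)}, \ldots, w_n^{(k_n-1)}$. Its independence complex is therefore the simplicial join $I((G \setminus u_n)') \ast I(K_{k_n - 1})$; since simplicial joins preserve vertex decomposability and $I(K_{k_n-1})$ is the discrete complex on $k_n - 1 \geq 1$ points, the deletion is vertex decomposable by the inductive hypothesis applied to the first factor. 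Shellability of $I(G')$ then follows from the standard implication that vertex decomposable complexes are shellable.

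The main point to verify carefully is the shedding condition on $u_n$, namely that every maximal independent set of $G' \setminus u_n$ contains a $G'$-neighbour of $u_n$. This is immediate once one notices that such a set must contain exactly one $w_n^{(j)}$ (arising from the residual $K_{k_n-1}$ component), and every such $w_n^{(j)}$ is adjacent to $u_n$ in $G'$. The remainder of the argument is essentially bookkeeping of the induced subgraphs appearing in the link and deletion at $u_n$; I expect the polyhedral join framework developed in the paper will streamline this bookkeeping considerably by packaging the whole family of whiskerings as a single construction $\mathcal{Z}^*_{I(G)}(\underline{K}, \underline{L})$ with $K_i = I(K_{k_i})$ and $L_i = K_i \setminus \{u_i\}$.
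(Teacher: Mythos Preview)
The paper does not supply its own proof of this statement: it is quoted from Hibi--Higashitani--Kimura--O'Keefe as motivation, and the paper's contribution is the generalisation in Theorem~\ref{main theorem} (the final example then recovers the constant-$k_i$ case by checking that a complete graph $H$ satisfies condition~(2)). Your argument via vertex decomposability is in the spirit of the original source and is essentially sound, but there is a slip in the link computation.

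When you delete $N_{G'}[u_n]$ from $G'$ you remove $u_n$, the new vertices attached to $u_n$, and each $u_j$ with $u_ju_n\in E(G)$; but you do \emph{not} remove the new vertices attached to those $u_j$, since such a vertex is adjacent only to other members of its own clique $K_{k_j}$. Hence $G'\setminus N_{G'}[u_n]$ is the disjoint union of the whiskering of $G\setminus N_G[u_n]$ together with one residual clique $K_{k_j-1}$ for each neighbour $u_j$ of $u_n$ in $G$. In particular your degenerate-case claim that the link is the $(-1)$-complex when $V(G)=N_G[u_n]$ fails whenever $n\ge 2$. The repair is immediate: the independence complex of the link is a join of the smaller instance with finitely many discrete complexes, and vertex decomposability is preserved under joins, so the induction still closes. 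Your deletion analysis is correct as written.

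The route implicit in the present paper is the one you anticipate at the end: realise $I(G')$ as a polyhedral join over $I(G)$ with $(K_i,L_i)=(I(K_{k_i}),\dl{K_i}{u_i})$, note that $\facet{L_i}\subset\facet{K_i}$ and that each $K_i$ admits a shelling listing the $L_i$-facets first, and invoke (the evident varying-pair version of) Theorem~\ref{sufficient}; purity then comes from Theorem~\ref{pure}. This bypasses the induction entirely but yields only shellability, whereas your argument delivers the stronger conclusion of vertex decomposability.
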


\noindent
Motivated by Theorem \ref{motivation}, we consider the graph $G[H;\{v_0\}]$ defined as follows.
Let $G$, $H$ be graphs and $v_0$ be a vertex of $H$. Define $G[H;\{v_0\}]$ as the graph obtained from $G \sqcup \left( \bigsqcup_{u \in V(G)} H_u \right)$, where $H_u$ is a copy of $H$, by identifying $u \in G$ with $v_0 \in H_u$.
Theorem \ref{motivation} implies that if $H$ is a complete graph, then $G[H;\{v_0\}]$ is well-covered and shellable for any graph $G$.

In this paper, we obtain a necessary and sufficient condition on $H$ and $v_0$ for $G[H;\{v_0\}]$ being well-covered and shellable for any graph $G$.
\begin{theorem}
\label{main theorem}
Let $H$ be a graph and $v_0$ be a vertex of $H$. Then the following two conditions are equivalent.
\begin{enumerate}
\item For any graph $G$, $G[H;\{v_0\}]$ is well-covered and shellable.
\item $H$ is well-covered, both $H$ and $H \setminus \{v_0\}$ are shellable, and for any maximal independent set $\tau$ of $H \setminus \{v_0\}$, there exists $v \in \tau$ such that $\{v_0, v\} \in E(H)$.
\end{enumerate}
\end{theorem}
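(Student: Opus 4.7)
The plan is to translate $G[H;\{v_0\}]$ into the language of polyhedral joins and then apply the paper's earlier characterization results. The key identification, immediate from the construction, is
\[
I\bigl(G[H;\{v_0\}]\bigr) \;=\; \polyindv,
\]
since a subset of the total vertex set is independent iff its intersection $\tau_u$ with each $V(H_u)$ is an independent set of $H$ and the set $\{u : v_0 \in \tau_u\}$ of ``top'' indices is an independent set of $G$. This places the theorem entirely inside the polyhedral-join framework developed earlier.

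For $(2)\Rightarrow(1)$, I would verify that the pair $(I(H),\,I(H\setminus\{v_0\}))$ satisfies the sufficient condition proved earlier for $\mathcal{Z}^*_M(K,L)$ to be shellable independently of $M$. Shellability of $I(H)$ and $I(H\setminus\{v_0\})$ supplies the shellable building blocks, while condition (c) is precisely the statement that every facet of $I(H\setminus\{v_0\})$ is also a facet of $I(H)$: the presence of a neighbor of $v_0$ in $\tau$ prevents $\tau\cup\{v_0\}$ from being a face of $I(H)$. Well-coveredness of the polyhedral join then reduces to the fact that $I(H)$ and $I(H\setminus\{v_0\})$ share a common facet size, which follows from the well-coveredness of $H$ together with condition (c).

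For $(1)\Rightarrow(2)$, I would specialize to well-chosen driving graphs $G$. Taking $G$ to be a single vertex gives $G[H;\{v_0\}]\cong H$, yielding shellability and well-coveredness of $H$. Taking $G=K_2$ with vertices $u_1,u_2$, the link of $u_1$ in $I(K_2[H;\{v_0\}])$ equals the simplicial join $I(H\setminus N_H[v_0]) \ast I(H\setminus\{v_0\})$; since links of non-pure shellable complexes are shellable (by Bj\"{o}rner--Wachs) and a simplicial join is shellable only if each non-empty factor is, $H\setminus\{v_0\}$ must be shellable, and a parallel facet-size comparison on $K_2[H;\{v_0\}]$ yields well-coveredness of $H\setminus\{v_0\}$. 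Finally, if condition (c) failed, then (given the well-coveredness of $H$ and $H\setminus\{v_0\}$) the facets of $I(H\setminus\{v_0\})$ would have size exactly one less than those of $I(H)$, and an analysis of when a face is maximal in $G[H;\{v_0\}]$ shows that its cardinality depends linearly on the size of the chosen facet of $I(G)$; taking $G$ to be any non-well-covered graph, such as $P_4$, would then produce facets of different sizes, contradicting (1).

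The main obstacle lies in the necessity direction: one must choose a suitable driving graph $G$ to extract each ingredient of (2). The link identification for $G=K_2$ is essential for extracting shellability of $H\setminus\{v_0\}$, and the choice of a non-well-covered $G$ is what detects any failure of (c). The sufficiency direction is, by contrast, a relatively direct specialization of the earlier polyhedral-join shellability theorem to the independence-complex setting.
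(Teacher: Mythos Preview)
Your approach is sound and, in the $(2)\Rightarrow(1)$ direction, matches the paper: condition (c) translates to $\facet{I(H\setminus\{v_0\})}\subset\facet{I(H)}$, and since $I(H\setminus\{v_0\})=\dl{I(H)}{v_0}$, Theorem~\ref{equivalent} applies directly (together with Theorem~\ref{pure} for well-coveredness).

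For $(1)\Rightarrow(2)$ your route diverges from the paper's. The paper takes $G=C_4$, whose independence complex is $\langle\{1,3\},\{2,4\}\rangle$, and invokes Claim~\ref{nonshellable} to force $\facet{I(H\setminus\{v_0\})}\subset\facet{I(H)}$; only then does it apply Theorem~\ref{necessary1} to extract shellability of both $I(H)$ and $I(H\setminus\{v_0\})$ simultaneously. You instead read off $H$ from $G=\mathrm{pt}$, obtain shellability of $H\setminus\{v_0\}$ from the link of $(u_1,v_0)$ in $I(K_2[H;\{v_0\}])$ (a direct argument that bypasses Theorem~\ref{necessary1} entirely), and then recover condition (c) via a pureness contradiction with a non-well-covered driver $G$. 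Your version is more elementary in that it avoids the structural results Claim~\ref{nonshellable} and Theorem~\ref{necessary1}, relying only on standard facts about links and joins; the paper's version, by contrast, shows how the general polyhedral-join machinery developed earlier feeds back into the application.

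One slip to fix: $P_4$ is well-covered---its maximal independent sets $\{1,3\}$, $\{1,4\}$, $\{2,4\}$ all have size $2$---so it cannot serve as your witness in the last step. Any genuinely non-well-covered $G$ works, for instance $P_3$ (maximal independent sets $\{1,3\}$ and $\{2\}$) or $P_5$. With that correction, your facet-size argument goes through: once (c) fails you have $\dim I(H\setminus\{v_0\})=\dim I(H)-1$ and $\facet{I(H\setminus\{v_0\})}\cap\facet{I(H)}=\emptyset$, so by Proposition~\ref{facet description} every facet $\phi$ of the polyhedral join has $\overline{\phi}\in\facet{I(G)}$ and $|\phi|=|V(G)|\dim I(H)+|\overline{\phi}|$, which varies exactly when $G$ is not well-covered.
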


In order to prove Theorem \ref{main theorem}, we need to investigate the independence complex of $G[H;\{v_0\}]$. With $I(G)$ and $I(H)$, the independence complex $I(G[H;\{v_0\}])$ is described as a {\it polyhedral join}. Polyhedral join is a construction of simplicial complexes introduced by Ayzenberg \cite[Definition 4.2, Observation 4.3]{Ayzenberg13}. It is similar to {\it polyhedral product} $\mathcal{Z}_K (\underline{X}, \underline{A})$, a well-known construction of spaces, where $K$ is a simplicial complex and $(\underline{X}, \underline{A}) = \{(X_u, A_u)\}_{u \in V(K)}$ is a family of pairs of spaces. The definition of polyhedral joins is obtained from the definition of polyhedral products by replacing ``pairs of spaces'' and ``product of spaces'' with ``pairs of simplicial complex and its subcomplex'' and ``join of simplicial complexes'', respectively.
Polyhedral joins appear in previous studies, including when they are called by other names.
Bahri, Bendersky, Cohen, and Gitler \cite[Definition 2.1]{BahriBenderskyCohenGitler15} defined a simplicial complex $K(J)$ for a simplicial complex $K$ on $V(K)=\{1, \ldots, n\}$ and a tuple $J=(j_1, \ldots, j_n)$ of positive integers. 
Using our notation of polyhedral joins, $K(J)$ is denoted by
\begin{align*}
&\mathcal{Z}^*_K (\underline{\Delta^{J-1}}, \underline{\partial \Delta^{J-1}})  \text{, where }
(\underline{\Delta^{J-1}}, \underline{\partial \Delta^{J-1}} ) =\{( \Delta^{j_i-1}, \partial \Delta^{j_i-1} )\}_{i \in \{1, \ldots, m\}}.
\end{align*}
Here, $\Delta^d$ is the $d$-simplex and $\partial \Delta^d$ is its boundary. They obtained the decomposition of polyhedral products, more precisely, {\it moment-angle complexes}, denoted by 
\begin{align*}
\mathcal{Z}_K (D^2,S^1) = \mathcal{Z}_{\mathcal{Z}^*_K (\Delta^1, \partial \Delta^1)} (D^1, S^0) .
\end{align*}
We note that the above observation is mentioned by Vidaurre \cite{Vidaurre18}, who investigated the polyhedral products over polyhedral joins.
Another example is $(j_1, \ldots, j_n)$-expansion of $K$, which is introduced by Moradi and Khosh-Ahang \cite[Definition 2.1]{MoradiKhoshahang16}. It is denoted by
\begin{align*}
\mathcal{Z}^*_K (\underline{\mathrm{pt}^{J}}, \{\emptyset\})  \text{, where } (\underline{\mathrm{pt}^J}, \{\emptyset\}) &=\left\{ \left( {\bigsqcup}_{j_i} \mathrm{pt}, \{\emptyset\} \right) \right\}_{i \in \{1, \ldots, m\}}.
\end{align*}
They studied the shellability and vertex decomposability of expansions. We generalize one of their results \cite[Theorem 2.12]{MoradiKhoshahang16}.

This paper is organized as follows. In Section \ref{preliminaries}, we define terminologies and notations on simplicial complexes and state some basic properties of shellable simplicial complexes. Section \ref{facets of polyhedral joins} provides the definition of polyhedral joins and the explicit description of simplices and facets of polyhedral joins. Section \ref{shellability of polyhedral joins} is the main part of this paper. Here we obtain two necessary conditions and two sufficient conditions for polyhedral joins being shellable, giving counterexamples of converse propositions. Note that some of the results are not relevant to Theorem \ref{main theorem}. Finally, in Section \ref{applications}, we apply the results obtained in Section \ref{shellability of polyhedral joins} to the independence complexes of graphs and prove Theorem \ref{main theorem}.

\section{Preliminaries}
\label{preliminaries}
In the following, for a positive integer $m$, we set $[m]=\{1,2,\ldots, m\}$.

An {\it abstract simplicial complex} $K$ is a collection of finite subsets of a given set $V(K)$ such that
if $\sigma \in K$ and $\tau \subset \sigma$, then $\tau \in K$. In this paper, we drop the adjective ``abstract''.
An element of $K$ is called a {\it simplex} of $K$. An element of $V(K)$ is called a {\it vertex} of $K$.
We suppose that $\{ v \} \in K$ for any $v \in V(K)$. 
We set $\dim K = \max_{\sigma \in K} |\sigma| -1 $, where $|\sigma|$ is the cardinality of $\sigma \subset V(K)$. If $\dim K =d$, then $K$ is called {\it $d$-dimensional}.
A maximal simplex with respect to the inclusion is called a {\it facet} of $K$. $K$ is called {\it pure} if every facet of $K$ has the same cardinality.

$L \subset K$ is called a {\it subcomplex} of $K$ if $L$ is a simplicial complex. In this paper, we call $(K, L)$ a {\it pair of simplicial complexes} if $K$ is a simplicial complex and $L \subset K$ is a subcomplex of $K$. 
For a vertex $v \in V(K)$ of $K$, we define a subcomplex $\dl{K}{v}$ of $K$ by
\begin{align*}
\dl{K}{v} = \{ \sigma \in K \ |\ v \notin \sigma \}.
\end{align*}

Let $\{K_i\}_{i \in [m]}$ be a family of simplicial complexes. We define a simplicial complex $K_1 * \cdots * K_m$, which we call the {\it join} of $K_1, K_2, \ldots$, and $K_m$, by
\begin{align*}
K_1 * \cdots * K_m = \left\{ \sigma \subset \bigsqcup_{i \in [m]} V(K_i) \ \middle|\ 
\sigma \cap V(K_i) \in K_i \text{ for any $i \in [m]$}  \right\}.
\end{align*} 

Let $V$ be a finite set and $F_1, \ldots, F_t \subset V$ be a collection of subsets of $V$ such that $F_i \nsubseteq F_j$ for any $i \neq j$. Define a simplicial complex $\langle F_1, \ldots, F_t \rangle$ on $V$ by
\begin{align*}
\langle F_1, \ldots, F_t \rangle= \{ \sigma \subset V \ |\ \sigma \subset F_i \text{ for some } i \in [t] \}.
\end{align*}

As defined in Section \ref{introduction}, a simplicial complex $K$ is {\it shellable} if its facets can be arranged in a linear order $F_1, F_2, \ldots , F_t$ (which we call a {\it shelling}) in such a way that the subcomplex $\left(\bigcup_{i=1}^{k-1} \langle F_i \rangle \right) \cap \langle F_k \rangle$ is pure and $(\dim F_k -1)$-dimensional for all $k=2, \ldots , t$. Here we state some of the properties of shellable simplicial complexes without proofs.

\begin{lemma}[Based on {\cite[Lemma 2.3]{BjornerWachs96}}]
\label{shellable definition}
Let $K$ be a simplicial complex. An order $F_1, F_2, \ldots, F_t$ of the facets of $K$ is a shelling if and only if for every $i, k$ with $1 \leq i < k \leq t$, there exists an index $j$ with $1 \leq j <k$ and a vertex $x \in F_k \setminus F_i$ such that $F_j \cap F_k = F_k \setminus \{x\}$.
\end{lemma}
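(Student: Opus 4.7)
The plan is to prove both directions using the elementary identity
\[
\Bigl(\bigcup_{i=1}^{k-1}\langle F_i\rangle\Bigr)\cap\langle F_k\rangle \;=\; \bigcup_{i=1}^{k-1}\langle F_i\cap F_k\rangle,
\]
which reduces the shelling condition to statements about the intersections $F_i\cap F_k$ lying inside $F_k$. Throughout, I will use that, since $F_i\neq F_k$ are both facets of $K$, we have $F_i\cap F_k\subsetneq F_k$, so the intersection complex on the left has dimension at most $\dim F_k-1$.

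For the forward direction, I would fix $1\le i<k\le t$ and use the shelling hypothesis to get that the intersection complex is pure of dimension $\dim F_k-1$. The simplex $F_i\cap F_k$ belongs to this intersection, so it is contained in some facet of it, say $\tau$, with $|\tau|=|F_k|-1$. Since $\tau\in\langle F_k\rangle$, necessarily $\tau=F_k\setminus\{x\}$ for some $x\in F_k$, and since $\tau\in\langle F_j\rangle$ for some $j<k$ we have $F_k\setminus\{x\}\subset F_j\cap F_k$. Combined with $|F_j\cap F_k|\le|F_k|-1$, this forces $F_j\cap F_k=F_k\setminus\{x\}$. Finally, $F_i\cap F_k\subset F_k\setminus\{x\}$ means $x\notin F_i$, so $x\in F_k\setminus F_i$, as required.

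For the converse, I would verify that the intersection complex is pure of dimension exactly $\dim F_k-1$. Given any simplex $\sigma$ in it, pick $i<k$ with $\sigma\subset F_i\cap F_k$; by the hypothesis there exist $j<k$ and $x\in F_k\setminus F_i$ with $F_j\cap F_k=F_k\setminus\{x\}$. Since $x\notin F_i$, we get $\sigma\subset F_i\cap F_k\subset F_k\setminus\{x\}=F_j\cap F_k$, placing $\sigma$ inside a simplex of cardinality $|F_k|-1$ that itself lies in the intersection complex. Combined with the upper bound $|F_i\cap F_k|\le|F_k|-1$, this yields both purity and the correct dimension. I do not expect a serious obstacle: the only care needed is to track that the witness facet $F_k\setminus\{x\}$ indeed belongs to the intersection (via membership in $\langle F_j\rangle$) and to handle the degenerate case $|F_k|=1$, where $F_k\setminus\{x\}=\emptyset$ is a legitimate $(-1)$-dimensional simplex.
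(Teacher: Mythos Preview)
The paper does not actually prove this lemma: it is listed in Section~\ref{preliminaries} among ``some of the properties of shellable simplicial complexes'' that are stated ``without proofs'' and attributed to Bj{\"o}rner and Wachs. So there is no proof in the paper to compare against.

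Your argument is the standard one and is correct. The identity $\bigl(\bigcup_{i<k}\langle F_i\rangle\bigr)\cap\langle F_k\rangle=\bigcup_{i<k}\langle F_i\cap F_k\rangle$ is immediate, and both directions go through as you describe. One tiny point of exposition: in the converse direction you should note explicitly that the intersection complex is nonempty (it always contains $\emptyset$) and that applying the hypothesis with, say, $i=1$ already produces a face $F_j\cap F_k=F_k\setminus\{x\}$ of cardinality $|F_k|-1$, which nails the dimension as exactly $\dim F_k-1$ before you argue purity. Your handling of the degenerate case $|F_k|=1$ is fine: then every $F_i\cap F_k=\emptyset$ (distinct facets cannot contain one another), the intersection complex is $\{\emptyset\}$, and the witness is $x$ equal to the unique vertex of $F_k$ with any $j<k$.
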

\begin{lemma}[Based on {\cite[Lemma 2.6]{BjornerWachs96}}]
\label{rearrangement lemma}
Let $K$ be a shellable simplicial complex. Then there exists a shelling $F_1, F_2, \ldots, F_t$ such that $|F_i| \geq |F_j|$ for any $1 \leq i < j \leq t$.
\end{lemma}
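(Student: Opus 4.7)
The plan is a bubble-sort argument driven by the following swap claim: if $F_1, \ldots, F_t$ is a shelling with $|F_i| < |F_{i+1}|$ for some $i$, then the sequence
\[
G = (F_1, \ldots, F_{i-1}, F_{i+1}, F_i, F_{i+2}, \ldots, F_t)
\]
obtained by transposing $F_i$ and $F_{i+1}$ is again a shelling. A direct case analysis shows that such a transposition removes exactly the inversion at positions $(i,i+1)$ and leaves all other inversion statuses unchanged, so the total count $|\{(i,j) : i<j,\ |F_i|<|F_j|\}|$ strictly decreases; the process therefore terminates in a shelling with $|F_1| \geq \cdots \geq |F_t|$.

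\textbf{Verification of the swap, routine cases.} I would verify the swap claim via Lemma \ref{shellable definition}. Since $G_k = F_k$ for $k \notin \{i, i+1\}$, only pairs $(j_0, k)$ with $k \in \{i, i+1\}$ can be affected. For $k = i$ (so $G_i = F_{i+1}$) and $j_0 < i$, apply the original shelling condition to $(j_0, i+1)$ to obtain $j \leq i$ and $x \in F_{i+1} \setminus F_{j_0}$ with $F_j \cap F_{i+1} = F_{i+1} \setminus \{x\}$. The case $j = i$ is ruled out: it would give $|F_i \cap F_{i+1}| = |F_{i+1}| - 1 \geq |F_i|$, forcing $F_i \subseteq F_{i+1}$ and contradicting that $F_i$ is a facet. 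Hence $j < i$ and the same witness works in $G$. For $k = i+1$ (so $G_{i+1} = F_i$) and $j_0 < i$ in $G$, the original shelling condition for the pair $(j_0, i)$ directly supplies a witness $j < i$.

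\textbf{The main obstacle.} The delicate remaining case is $k = i+1$ with $j_0 = i$ in $G$, where $G_{j_0} = F_{i+1}$: I must produce some $j \leq i$ in $G$ and $x \in F_i \setminus F_{i+1}$ with $G_j \cap F_i = F_i \setminus \{x\}$. My strategy is to chain two applications of the original shelling condition. Applying it first to the pair $(i, i+1)$ yields, by the argument above, some $j^* < i$ and $x^* \in F_{i+1} \setminus F_i$ with $F_{j^*} \supseteq F_{i+1} \setminus \{x^*\}$; since $x^* \notin F_i$, this gives the key containment $F_{j^*} \supseteq F_i \cap F_{i+1}$. Applying the shelling condition next to the pair $(j^*, i)$ furnishes $j^{**} < i$ and $x^{**} \in F_i \setminus F_{j^*}$ with $F_{j^{**}} \cap F_i = F_i \setminus \{x^{**}\}$. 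The containment $F_{j^*} \supseteq F_i \cap F_{i+1}$ forces the excluded element $x^{**}$ out of $F_i \cap F_{i+1}$, so $x^{**} \in F_i \setminus F_{i+1}$, and $(j, x) = (j^{**}, x^{**})$ is the required witness. This closes the swap claim and, by iteration, the lemma.
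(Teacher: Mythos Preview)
The paper does not give its own proof of this lemma; it is listed among the preliminary facts stated ``without proofs'' and attributed to Bj\"orner--Wachs \cite[Lemma~2.6]{BjornerWachs96}. Your argument is correct and is essentially the original Bj\"orner--Wachs proof: a bubble-sort on adjacent facets, where the crucial point is that if $|F_i| < |F_{i+1}|$ then no shelling witness for the pair $(\cdot,\,i+1)$ can use $j=i$, since $F_i \cap F_{i+1} = F_{i+1}\setminus\{x\}$ would force $F_i \subseteq F_{i+1}$. Your chaining argument for the case $(j_0,k)=(i,i+1)$ in $G$ is exactly what is needed and is carried out correctly.

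One small inaccuracy of phrasing: the transposition does \emph{not} literally leave ``all other inversion statuses unchanged''; for $j<i$ the pairs $(j,i)$ and $(j,i+1)$ may trade inversion status. What is true is that the \emph{total} number of inversions among such pairs is preserved, so the net effect of the swap is to decrease the inversion count by exactly one, and your termination argument goes through.
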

\begin{lemma}[{\cite[Remark 10.22]{BjornerWachs97}}]
\label{join shellable}
The join of two simplicial complexes is shellable if and only if each of the simplicial complex is shellable.
\end{lemma}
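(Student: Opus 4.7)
The plan is to prove both directions using the characterization from Lemma \ref{shellable definition}. The key observation is that the facets of $K_1 * K_2$ are precisely the disjoint unions $F \sqcup G$ with $F$ a facet of $K_1$ and $G$ a facet of $K_2$; in particular, every facet of each factor occurs as a coordinate of at least one facet of the join, so in any shelling of the join every facet of each factor eventually appears.

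For the ``if'' direction I would fix shellings $F_1, \ldots, F_s$ of $K_1$ and $G_1, \ldots, G_t$ of $K_2$ and list the facets of $K_1 * K_2$ lexicographically, declaring $F_i \sqcup G_j$ to precede $F_{i'} \sqcup G_{j'}$ iff $i < i'$, or $i = i'$ and $j < j'$. Verifying Lemma \ref{shellable definition} for a pair $F_a \sqcup G_b$ strictly preceding $F_{a'} \sqcup G_{b'}$ splits into two symmetric cases. If $a < a'$, the shelling of $K_1$ supplies $a'' < a'$ and $x \in F_{a'} \setminus F_a$ with $F_{a''} \cap F_{a'} = F_{a'} \setminus \{x\}$; then $F_{a''} \sqcup G_{b'}$ precedes $F_{a'} \sqcup G_{b'}$ and witnesses the condition via the same $x$. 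The case $a = a'$, $b < b'$ is handled identically using the shelling of $K_2$.

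For the ``only if'' direction, given a shelling $H_1, \ldots, H_N$ of $K_1 * K_2$, I would write $H_k = A_k \sqcup B_k$ and, for each facet $F$ of $K_1$, set $\mu(F) = \min\{k : A_k = F\}$; then list the facets of $K_1$ in order of increasing $\mu$ as $F^{(1)}, \ldots, F^{(s)}$. To verify the shelling condition, take $a < b$ and apply Lemma \ref{shellable definition} to $H_{\mu(F^{(a)})}$ and $H_{\mu(F^{(b)})}$, producing $\ell < \mu(F^{(b)})$ and $x \in H_{\mu(F^{(b)})} \setminus H_{\mu(F^{(a)})}$ with $H_\ell \cap H_{\mu(F^{(b)})} = H_{\mu(F^{(b)})} \setminus \{x\}$. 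Writing $H_\ell = A_\ell \sqcup B_\ell$ and projecting this equation onto $V(K_1)$ and $V(K_2)$: if $x \in V(K_2)$ then $A_\ell \cap F^{(b)} = F^{(b)}$, forcing $A_\ell = F^{(b)}$ and contradicting the minimality of $\mu(F^{(b)})$; if $x \in V(K_1)$, then $x \in F^{(b)} \setminus F^{(a)}$ and $A_\ell \cap F^{(b)} = F^{(b)} \setminus \{x\}$, while $A_\ell = F^{(c)}$ for some $c < b$ (since $\mu(F^{(c)}) \le \ell < \mu(F^{(b)})$), giving exactly the witness required. The same argument with the roles of $K_1$ and $K_2$ swapped produces a shelling of $K_2$.

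The main obstacle is the ``only if'' direction, and specifically the choice of how to extract a candidate order on the facets of $K_1$ from a shelling of the join; a naive order such as ``list the first coordinates in the shelling order, with repetitions'' is not even well-defined as a shelling. Using the \emph{first}-appearance index $\mu(F)$ is essential: this minimality is exactly what rules out the case $x \in V(K_2)$, which would otherwise produce a witness in the join whose $K_1$-coordinate equals $F^{(b)}$ strictly before $\mu(F^{(b)})$ and thus fail to provide a valid witness in $K_1$.
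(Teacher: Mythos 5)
Your proof is correct. Note that the paper does not prove this lemma at all: it is stated without proof and attributed to Bj\"orner--Wachs (Remark 10.22 of \cite{BjornerWachs97}), so there is no in-paper argument to compare against. Your two directions both check out against Lemma \ref{shellable definition}: the lexicographic order on facets $F_i \sqcup G_j$ works for the ``if'' direction exactly as you describe, and in the ``only if'' direction the first-appearance index $\mu$ is indeed the right device --- the minimality of $\mu(F^{(b)})$ is what kills the case $x \in V(K_2)$ (which would force $A_\ell = F^{(b)}$ with $\ell < \mu(F^{(b)})$), and in the case $x \in V(K_1)$ the projection onto $V(K_1)$ hands you a facet $F^{(c)}$ with $\mu(F^{(c)}) \le \ell < \mu(F^{(b)})$, hence $c < b$, together with the required vertex $x \in F^{(b)} \setminus F^{(a)}$. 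The one implicit point worth making explicit is that $\mu$ is well-defined because every facet of $K_1$ occurs as the $K_1$-part of some facet of the join (pair it with any facet of $K_2$); you state this in your opening observation, so the argument is complete.
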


For a simplicial complex $X$, we denote the set of all facets of $X$ by $\facet{X}$. 
Note that for a pair $(K, L)$ of simplicial complexes, we have
\begin{align*}
\facet{K} \setminus \facet{L} &= \left\{ \sigma \in \facet{K} \ |\ \sigma \notin L \right\}, \\
\facet{L} \setminus \facet{K} &= \left\{ \tau \in \facet{L} \ |\ \text{there exists $\sigma \in \facet{K} \setminus \facet{L}$ such that $\tau \subsetneq \sigma$} \right\} ,\\
\facet{K} \cap \facet{L} &=\left\{ \sigma \in \facet{K} \ |\ \sigma \in L \right\}.
\end{align*}

\section{Facets of Polyhedral Joins}
\label{facets of polyhedral joins}
The main subject of this paper is {\it polyhedral join}. It is a construction of simplicial complexes, which is based on the definition by Ayzenberg \cite[Definition 4.2, Observation 4.3]{Ayzenberg13}. 
\begin{definition}
Let $M$ be a simplicial complex on $[m]$ and $\pair = \{(K_i, L_i)\}_{i \in [m]}$ be a family of pairs of simplicial complexes. For a simplex $S \in M$, we define a simplicial complex $\pair^{*S}$ by
\begin{align*}
\pair^{*S} = X_1 * X_2 * \cdots * X_m, \ X_i= \left\{
\begin{aligned}
&K_i & &(i \in S) , \\
&L_i & &(i \notin S) .
\end{aligned} \right.
\end{align*}
$\pair^{*S}$ is a subcomplex of $K_1 * \cdots * K_m$. 

$\poly$ is a subcomplex of $K_1 * \cdots *K_m$ defined by
\begin{align*}
\poly = \bigcup_{S \in M} \pair^{*S} 
\end{align*}
(union is taken in $K_1 * \cdots *K_m$).

Let $(K, L)$ be a pair of simplicial complexes. If $K_i = K, L_i = L$ for any $i \in [m]$, we write $\polyp$ instead of $\poly$. 
\end{definition}

We give an explicit description of simplices and facets of $\poly$.
\begin{proposition}
\label{alternative definition}
Let $M$ be a simplicial complex on $[m]$ and $\pair = \{(K_i, L_i)\}_{i \in [m]}$ be a family of pairs of simplicial complexes. 
For $\phi \subset \bigsqcup_{i \in [m]} V(K_i)$, we set
\begin{align*}
\phi_i &= \phi \cap V(K_i) \ (i \in [m]), \\
\overline{\phi} &=\{i \in [m] \ |\ \phi_i \notin L_i \}.
\end{align*}
Then $\phi \in \poly$ if and only if $\phi_i \in K_i$ for any $i \in [m]$ and $\overline{\phi} \in M$.
\end{proposition}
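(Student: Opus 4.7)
The proof is a direct unpacking of the two definitions, so my plan is simply to verify the biconditional by exhibiting a natural choice of $S \in M$ in each direction.

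For the forward implication, I will assume $\phi \in \poly$ and pick $S \in M$ with $\phi \in \pair^{*S}$, which exists by definition of the union. From the definition of the join, $\phi_i = \phi \cap V(K_i) \in X_i$ for every $i \in [m]$, where $X_i$ is either $K_i$ or $L_i$. Since $L_i \subset K_i$ in both cases, this immediately gives $\phi_i \in K_i$. Moreover, for every $i \notin S$ we have $X_i = L_i$, so $\phi_i \in L_i$; equivalently, $\overline{\phi} \subset S$. Because $M$ is closed under taking subsets and $S \in M$, we conclude $\overline{\phi} \in M$.

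For the backward implication, I will take $S := \overline{\phi}$, which lies in $M$ by hypothesis. Then by construction, $i \in S$ forces $X_i = K_i$ and $\phi_i \in K_i$, while $i \notin S$ forces $X_i = L_i$ and $\phi_i \in L_i$ (this last is precisely the defining property of the complement of $\overline{\phi}$). Hence $\phi_i \in X_i$ for all $i$, which is exactly the condition for $\phi$ to be a simplex of $\pair^{*S} = X_1 * \cdots * X_m$. Therefore $\phi \in \pair^{*S} \subset \poly$.

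There is no real obstacle here; the only point that requires any care is to remember that $M$ is a simplicial complex, so that $\overline{\phi} \subset S \in M$ implies $\overline{\phi} \in M$ in the forward direction, and to verify that the chosen $S = \overline{\phi}$ is the \emph{smallest} simplex that witnesses membership (which is the reason why this particular $S$ is forced to lie in $M$). Once these observations are made, the argument is a single line in each direction.
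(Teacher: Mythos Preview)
Your proof is correct and follows essentially the same approach as the paper's own proof: both directions use exactly the argument you give, including the choice $S = \overline{\phi}$ for the backward implication and the observation that $\overline{\phi} \subset S \in M$ implies $\overline{\phi} \in M$ for the forward one.
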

\begin{proof}
Let $\phi \in \poly$. Then $\phi$ is a simplex of $\phi \in \pair^{*S}$ for some $S \in M$. 
So, by the definition of the join, we have
\begin{itemize}
\item $\phi \cap V(K_i) = \phi_i \in K_i$ for any $i \in S$,
\item $\phi \cap V(K_i) = \phi_i \in L_i \subset K_i$ for any $i \notin S$.
\end{itemize}
It follows that $\phi_i \in K_i$ for any $i \in [m]$ and that $\overline{\phi} \subset S$. Since $M$ is a simplicial complex and $S \in M$, we have $\overline{\phi} \in M$.

Conversely, let $\phi \subset \bigsqcup_{i \in [m]} V(K_i)$ be a set such that $\phi_i \in K_i$ for any $i \in [m]$ and $\overline{\phi} \in M$. By the definition of $\overline{\phi}$, $i \notin \overline{\phi}$ implies $\phi_i \in L_i$. So, we have $\phi \in \pair^{* \overline{\phi}} \subset \poly$.
\end{proof}

\begin{proposition}
\label{facet description}
Let $M$ be a simplicial complex on $[m]$ and $\pair = \{(K_i, L_i)\}_{i \in [m]}$ be a family of pairs of simplicial complexes. 
Then $\phi \subset \bigsqcup_{i \in [m]} V(K_i)$ is a facet of $\poly$ if and only if
\begin{itemize}
\item $\phi_i \in \facet{K_i} \cup \facet{L_i}$ for any $i \in [m]$,
\item $\overline{\phi} \in M$, and
\item $\overline{\phi} \cup \{i\} \notin M$ for any $i \in [m]$ such that $\phi_i \in \facet{L_i} \setminus \facet{K_i}$.
\end{itemize}
\end{proposition}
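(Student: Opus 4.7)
The plan is to unpack ``facet'' via Proposition \ref{alternative definition}: $\phi$ is a facet of $\poly$ iff $\phi \in \poly$ and $\phi \cup \{v\} \notin \poly$ for every vertex $v \in \bigsqcup_{i \in [m]} V(K_i) \setminus \phi$. Proposition \ref{alternative definition} rephrases both conditions in terms of the components $\phi_i = \phi \cap V(K_i)$ and the set $\overline{\phi} = \{i : \phi_i \notin L_i\}$. The key observation I would rely on throughout is that adding a vertex $v \in V(K_i)$ leaves $\phi_j$ unchanged for $j \neq i$, so $\overline{\phi \cup \{v\}}$ differs from $\overline{\phi}$ by at most the single element $i$, and it differs iff $i \notin \overline{\phi}$ and $\phi_i \cup \{v\} \in K_i \setminus L_i$.

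For the forward direction (facet implies the three bullet conditions), the condition $\overline{\phi} \in M$ is immediate from Proposition \ref{alternative definition}. For the bullet on $\phi_i \in \facet{K_i} \cup \facet{L_i}$, I argue by contradiction: if $\phi_i$ is neither a facet of $K_i$ nor a facet of $L_i$, I can enlarge $\phi_i$ while staying inside $K_i$ (and inside $L_i$ if $i \notin \overline{\phi}$, using $\phi_i \in L_i$ and that $\phi_i$ is not a facet of $L_i$). In either sub-case the enlargement leaves $\overline{\phi}$ unchanged, so the resulting set lies in $\poly$ by Proposition \ref{alternative definition}, contradicting maximality. For the third bullet, suppose $\phi_i \in \facet{L_i} \setminus \facet{K_i}$; then $i \notin \overline{\phi}$, and there exists $v$ with $\phi_i \cup \{v\} \in K_i$. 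Since $\phi_i$ is already a facet of $L_i$, this extension cannot lie in $L_i$, hence $\overline{\phi \cup \{v\}} = \overline{\phi} \cup \{i\}$; maximality of $\phi$ then forces $\overline{\phi} \cup \{i\} \notin M$.

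For the reverse direction, assume all three bullets. Membership $\phi \in \poly$ follows from the first two via Proposition \ref{alternative definition}. For maximality, suppose $\phi \cup \{v\} \in \poly$ for some $v \in V(K_i) \setminus \phi_i$, so $\phi_i \cup \{v\} \in K_i$. If $\phi_i \in \facet{K_i}$, this is an immediate contradiction. Otherwise $\phi_i \in \facet{L_i} \setminus \facet{K_i}$, which forces $i \notin \overline{\phi}$; the maximality of $\phi_i$ in $L_i$ implies $\phi_i \cup \{v\} \notin L_i$, hence $\overline{\phi \cup \{v\}} = \overline{\phi} \cup \{i\}$, which is not in $M$ by the third bullet—contradicting $\phi \cup \{v\} \in \poly$.

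The only real subtlety is the case $\phi_i \in \facet{L_i} \setminus \facet{K_i}$ with $i \notin \overline{\phi}$: here one must carefully distinguish extensions of $\phi_i$ inside $L_i$ (which do not exist, by facet-ness in $L_i$) from extensions inside $K_i \setminus L_i$ (which toggle membership of $i$ in $\overline{\phi}$), so that the third bullet captures exactly the right obstruction. Once this case is correctly set up, the remainder is routine bookkeeping with $\overline{\phi}$.
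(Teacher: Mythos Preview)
Your proposal is correct and follows essentially the same approach as the paper's proof: both directions proceed by enlarging a single component $\phi_i$ and tracking how $\overline{\phi}$ changes, using Proposition~\ref{alternative definition} to translate between membership in $\poly$ and the data $(\phi_i)_i$, $\overline{\phi}$. The only cosmetic differences are that the paper replaces $\phi_k$ by an arbitrary larger simplex $\sigma$ (via the notation $\phi^{(k,\sigma)}$) rather than a one-vertex extension, and in the converse it takes an arbitrary $\psi \supsetneq \phi$ rather than $\phi \cup \{v\}$; since simplicial complexes are closed under taking subsets, these formulations are equivalent.
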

\begin{proof}
Let $\phi$ be a facet of $\poly$. For $k \in [m]$ and $\sigma \in K_k$, define $\phi^{(k, \sigma)} \subset \bigsqcup_{i \in [m]} V(K_i)$ by 
\begin{align*}
\phi^{(k,\sigma)}_i = \left\{
\begin{aligned}
& \phi_i & &(i \neq k) \\
& \sigma & &(i=k).
\end{aligned} \right.
\end{align*} 

First, assume that there exists $\sigma \in K_k$ such that $\phi_k \subsetneq \sigma$ for some $k \in \overline{\phi}$. 
We have $\sigma \notin L_k$ since $\sigma \supset \phi_k \notin L_k$. Therefore, we get $\overline{\phi^{(k,\sigma)}} = \overline{\phi} \in M$. So, $\phi^{(k,\sigma)}$ is a simplex of $\poly$ which satisfies $\phi^{(k,\sigma)} \supsetneq \phi$, a contradiction. Thus, $\phi_k$ is a facet of $K_k$. 

Second, assume that there exists $\tau \in L_k$ such that $\phi_k \subsetneq \tau$ for some $k \notin \overline{\phi}$. 
Since $\phi_k ,\tau \in L_k$, we get $\overline{\phi^{(k,\tau)}}= \overline{\phi} \in M$. So, $\phi^{(k,\tau)}$ is a simplex of $\poly$ which satisfies $\phi^{(k,\tau)} \supsetneq \phi$, a contradiction. Thus, $\phi_k$ is a facet of $L_k$. 

Finally, assume that there exists $j \in [m]$ such that $\phi_j \in \facet{L_j} \setminus \facet{K_j}$ and $\overline{\phi} \cup \{j\} \in M$. Then there must be $\rho \in K_j \setminus L_j$ such that $\phi_j \subsetneq \rho$ since $\phi_j$ is a facet of $L_j$ and is not a facet of $K_j$. We get that $\phi^{(j,\rho)}$ is a simplex of $\poly$ since $\overline{\phi^{(j,\rho)}}=\overline{\phi} \cup \{j\} \in M$. This is a contradiction to the maximality of $\phi$. By the above three arguments, we conclude that a facet $\phi$ of $\poly$ satisfies three conditions in the proposition.

Conversely, suppose that $\phi \subset \bigsqcup_{i \in [m]} V(K_i)$ satisfies three conditions in the proposition.
Since $\phi_i \in \facet{K_i} \cup \facet{L_i} \subset K_i$ for any $i \in [m]$ and $\overline{\phi} \in M$, $\phi$ is a simplex of $\poly$. 
We assume that there exists $\psi \in \poly$ such that $\phi \subsetneq \psi$ and deduce a contradiction. There must exists $k \in [m]$ such that $\phi_k \subsetneq \psi_k$. This means that $\phi_k$ is not a facet of $K_k$, which implies $\phi_k \in \facet{L_k} \setminus \facet{K_k}$. 
It follows that $k \in \overline{\psi}$ since $\phi_k \subsetneq \psi_k$ and $\phi_k$ is a facet of $L_k$. Thus, we get $\overline{\phi} \cup \{k\} \subset \overline{\psi} \in M$, which contradicts to the third condition in the proposition. Therefore, $\phi$ is a facet of $\poly$.
\end{proof}

\begin{remark}
\label{facet description remark}
If $\facet{L_i} \subset \facet{K_i}$ for any $i \in [m]$, then by Proposition \ref{facet description}, $\phi \subset \bigsqcup_{i \in [m]} V(K_i)$ is a facet of $\poly$ if and only if $\phi_i \in \facet{K_i}$ for any $i \in [m]$ and $\overline{\phi} \in M$.
\end{remark}

Before investigating the shellability of $\polyp$, we show a necessary and sufficient condition for $\polyp$ being pure.
\begin{theorem}
\label{pure}
Let $M \neq \{\emptyset\}$ be a simplicial complex which is not a simplex and $K \supsetneq L$ be a pair of simplicial complexes. Then $\polyp$ is pure if and only if
\begin{itemize}
\item $K$ is pure and $\facet{L} \subset \facet{K}$, or
\item $K, L, M$ are pure.
\end{itemize}
\end{theorem}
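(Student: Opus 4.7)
The plan is to use Proposition \ref{facet description} throughout, decomposing $|\phi| = \sum_i |\phi_i|$ according to whether each coordinate $\phi_i$ lies in $\facet{K} \setminus L$, $\facet{K} \cap \facet{L}$, or $\facet{L} \setminus \facet{K}$. Write $f = \dim K + 1$ and (when $L$ is pure) $\ell = \dim L + 1$.

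\emph{Sufficiency.} If $K$ is pure and $\facet{L} \subset \facet{K}$, Remark \ref{facet description remark} forces every coordinate of a facet of $\polyp$ to lie in $\facet{K}$, so $|\phi| = mf$ is constant. If $K, L, M$ are all pure, I split on $\ell$ versus $f$. When $\ell = f$, each facet of $L$ attains maximal size in $K$, hence lies in $\facet{K}$; so $\facet{L} \subset \facet{K}$ and the previous case applies. When $\ell < f$, purity of $L$ forces $\facet{K} \cap L = \emptyset$: any facet of $K$ contained in $L$ would be a simplex of $L$ of size $f > \ell$. Proposition \ref{facet description} then identifies a facet of $\polyp$ with a facet $A$ of $M$ together with arbitrary $\phi_i \in \facet{K}$ for $i \in A$ and $\phi_i \in \facet{L}$ for $i \notin A$; the cardinality $(f-\ell)|A| + \ell m$ is constant by purity of $M$.

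\emph{Necessity, step 1 ($K$ is pure).} Assume $\polyp$ is pure, and toward contradiction pick $F_1, F_2 \in \facet{K}$ of different sizes. If both lie in $\facet{K} \setminus L$, extend $(F_1, \emptyset, \ldots, \emptyset)$ (a simplex of $\polyp$ since $\{i_0\} \in M$ for any $i_0 \in V(M) \neq \emptyset$) to a facet $\tilde\phi$; necessarily $\tilde\phi_{i_0} = F_1$, and replacing $F_1$ by $F_2$ at coordinate $i_0$ leaves $\overline{\tilde\phi}$ unchanged (both are outside $L$), producing another facet of different size by Proposition \ref{facet description}. Otherwise $\facet{K} \setminus L$ is of constant size $f_1$, and non-purity of $K$ then forces some $F \in \facet{K} \cap L \subset \facet{K} \cap \facet{L}$ with $|F| \neq f_1$; the constant tuple $(F, \ldots, F)$ and its one-coordinate modification $(F', F, \ldots, F)$ for $F' \in \facet{K} \setminus L$ are both facets of $\polyp$, of different sizes.

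\emph{Necessity, step 2 ($L$ and $M$ are pure when $\facet{L} \not\subset \facet{K}$).} With $K$ pure, pick $\sigma \in \facet{L} \setminus \facet{K}$ (so $|\sigma| < f$). Since $M$ is not a simplex, some facet $A$ of $M$ has a coordinate $i_0 \notin A$, and any assignment with $\phi_i \in \facet{K}$ for $i \in A$ and facets of $L$ placed freely on the remaining coordinates is a facet of $\polyp$ by Proposition \ref{facet description}, because $A \cup \{i\} \notin M$ for every $i \notin A$. Comparing the constant tuple $(F, \ldots, F)$ (size $mf$) against one with $\sigma$ placed at $i_0$ (size $mf - (f - |\sigma|)$) rules out any $F \in \facet{K} \cap L$, so $\facet{K} \cap L = \emptyset$. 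Swapping two facets of $L$ of different sizes at $i_0$ then shows $L$ is pure. Finally, with $L$ pure and $\facet{K} \cap L = \emptyset$ (forcing $\ell < f$), every facet of $\polyp$ has size $(f - \ell)|A| + \ell m$ for $A$ a facet of $M$, so purity of $\polyp$ gives purity of $M$. The most delicate step is step 1: the subcase where $\facet{K} \setminus L$ happens to be of constant size requires the constant-tuple construction using a facet in $\facet{K} \cap L$ rather than a direct coordinate swap, and one must verify carefully that both tuples satisfy all three conditions of Proposition \ref{facet description}.
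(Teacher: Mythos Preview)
Your sufficiency argument and Step~1 of the necessity are fine (the case split in Step~1 is phrased loosely --- the real dichotomy is whether $\facet{K}\setminus L$ contains two facets of different sizes or has constant size --- but the logic is sound). Step~2, however, has a genuine gap in the sub-step that is supposed to rule out $\facet{K}\cap L$. Your claim that ``any assignment with $\phi_i \in \facet{K}$ for $i \in A$ and facets of $L$ on the remaining coordinates is a facet'' requires $\overline{\phi} = A$ to verify condition~(iii) of Proposition~\ref{facet description}, but if some $\phi_i$ with $i \in A$ lies in $\facet{K}\cap L$ then $\overline{\phi} \subsetneq A$ and (iii) can fail. Concretely, the tuple you describe as ``$(F,\ldots,F)$ with $\sigma$ placed at $i_0$'', for $F \in \facet{K}\cap L$ and $\sigma \in \facet{L}\setminus\facet{K}$, has $\overline{\phi} = \emptyset$, and condition~(iii) at $i = i_0$ demands $\{i_0\} \notin M$, which is false. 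So this tuple is \emph{not} a facet, and your size comparison collapses.

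The repair is exactly the paper's construction $\Phi_S(\sigma^0,\tau)$: fix once and for all some $\sigma^0 \in \facet{K}\setminus L$ (which exists since $K \supsetneq L$), put $\sigma^0$ on every coordinate in $A$, and put an arbitrary $\tau \in \facet{L}$ on the coordinates outside $A$. This forces $\overline{\phi} = A$, so condition~(iii) holds automatically (as $A$ is a facet of $M$), and every such tuple is a genuine facet of $\polyp$ of size $|A|f + (m-|A|)|\tau|$. Varying $\tau$ over $\facet{L}$ then simultaneously rules out $\facet{K}\cap L$ (compare $\tau = F$ of size $f$ with $\tau = \sigma$ of size $< f$) and yields the purity of $L$; after that, your argument for the purity of $M$ goes through unchanged. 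Modulo this fix, your organization (first prove $K$ pure, then branch on whether $\facet{L}\subset\facet{K}$) is a minor rearrangement of the paper's argument, which instead branches on whether $\facet{K}\cap\facet{L}$ is empty.
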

\begin{proof}
We set $V(M)=[m]$.

Assume that $\polyp$ is pure.
For $S \in \facet{M}$, $\sigma \in \facet{K}$ and $\tau \in \facet{L}$, define $\Phi_S(\sigma, \tau) \subset \bigsqcup_{i \in [m]} V(K)$ by
\begin{align*}
&\Phi_S(\sigma, \tau)_i = \left\{
\begin{aligned}
& \sigma & &(i \in S) \\
& \tau & &(i \notin S).
\end{aligned} \right.
\end{align*}
Here, remark that $\Phi_S(\sigma, \tau)$ is a facet of $\polyp$ if $\sigma \in \facet{K} \setminus \facet{L}$ or $\tau \in \facet{K} \cap \facet{L}$ by Proposition \ref{facet description}. If $\Phi_S ( \sigma, \tau)$ is a facet of $\polyp$, then we have
\begin{align*}
|\Phi_S (\sigma, \tau)| &= \sum_{i \in [m]} |\Phi_S(\sigma, \tau)_i| = |S| |\sigma| + (m-|S|)|\tau|.
\end{align*}
Note that we have $0<|S|<m$ since $M$ is neither $\{\emptyset\}$ nor a simplex.

Suppose that $\facet{K} \cap \facet{L} \neq \emptyset$. Take $S \in \facet{M}$, $\sigma^0 \in \facet{K} \setminus \facet{L}$ and $\tau^0 \in \facet{K} \cap \facet{L}$. For any facets $\sigma, \sigma' \in \facet{K}$ of $K$, $\Phi_S(\sigma, \tau^0)$ and $\Phi_S(\sigma', \tau^0)$ are facets of $\polyp$. Since $\polyp$ is pure, we have $|\Phi_S(\sigma, \tau^0)| = |\Phi_S(\sigma' ,\tau^0)|$, namely
\begin{align*}
|S| |\sigma| + (m-|S|)|\tau^0| =|S| |\sigma'| + (m-|S|)|\tau^0| .
\end{align*}
It follows from $|S|>0$ that
\begin{align*}
|\sigma| = |\sigma'|.
\end{align*}
Therefore, $K$ is pure. For any facets $\tau, \tau' \in \facet{L}$ of $L$, 
$\Phi_S(\sigma^0, \tau)$ and $\Phi_S(\sigma^0, \tau')$ are facets of $\polyp$. Since $\polyp$ is pure, we have $|\Phi_S(\sigma^0, \tau)| = |\Phi_S(\sigma^0 ,\tau')|$, namely
\begin{align*}
|S| |\sigma^0| + (m-|S|)|\tau| =|S| |\sigma^0| + (m-|S|)|\tau'| .
\end{align*}
It follows from $|S|<m$ that
\begin{align*}
|\tau| = |\tau'|.
\end{align*}
Therefore, $L$ is pure. Hence, we obtain
\begin{align*}
\dim K = |\tau^0| -1 = \dim L .
\end{align*}
If there exists $\tau^1 \in \facet{L} \setminus \facet{K}$, then there must exist $\sigma^1 \in \facet{K} \setminus \facet{L}$ such that $\tau^1 \subsetneq \sigma^1$. So, we get
\begin{align*}
\dim K = |\sigma^1| -1 > |\tau^1|-1 = \dim L,
\end{align*}
which is a contradiction. Thus, we conclude that $\facet{L} \setminus \facet{K} = \emptyset$, namely $\facet{L} \subset \facet{K}$.

Next, suppose that $\facet{K} \cap \facet{L} = \emptyset$. Take $S \in \facet{M}$ and $\tau^0 \in \facet{L}$. Since $\tau^0$ is not a facet of $K$, there exist $\sigma^0 \in \facet{K}$ such that $\tau^0 \subsetneq \sigma^0$. For any facet $\sigma$ of $K$, $\Phi_S (\sigma, \tau^0)$ is a facet of $\polyp$ since $\facet{K} = \facet{K} \setminus \facet{L}$. Therefore, by the same argument as above, we obtain that both $K$ and $L$ are pure. For any facet $T, T' \in \facet{M}$ of $M$, $\Phi_T(\sigma^0, \tau^0)$ and $\Phi_{T'} (\sigma^0, \tau^0)$ are facets of $\polyp$. Since $\polyp$ is pure, we have $|\Phi_T (\sigma^0, \tau^0)| = |\Phi_{T'} (\sigma^0 ,\tau^0)|$, namely
\begin{align*}
|T| |\sigma^0| + (m-|T|)|\tau^0| =|T'| |\sigma^0| + (m-|T'|)|\tau^0| .
\end{align*}
It follows that
\begin{align*}
|T| = |T'|
\end{align*}
since $\sigma^0 \supsetneq \tau^0$. Therefore, we conclude that $M$ is pure. 

Conversely, assume that $K$ is pure and $\facet{L} \subset \facet{K}$. Then for any facet $\phi \in \facet{\polyp}$, we get
\begin{align*}
|\phi| = \sum_{i \in [m]} |\phi_i| = m(\dim K +1).
\end{align*}
Therefore, $\polyp$ is pure.

Finally, assume that $K, L, M$ are pure and that there exists $\tau \in \facet{L} \setminus \facet{K}$. Since there exists $\sigma \in \facet{K} \setminus \facet{L}$ such that $\tau \subsetneq \sigma$, we get
\begin{align*}
\dim K = |\sigma| -1 > |\tau|-1 = \dim L.
\end{align*}
So, we obtain $\facet{K} \cap \facet{L} = \emptyset$ since both $K$ and $L$ are pure. Thus, for any $\phi \in \facet{\polyp}$, $\overline{\phi}$ is a facet of $M$. This is because we have $\overline{\phi} \cup \{i\} \notin M$ for any $i \notin \overline{\phi}$ since $\facet{L} = \facet{L} \setminus \facet{K}$. Hence, we get
\begin{align*}
|\phi| & = \sum_{i \in \overline{\phi}} |\phi_i| + \sum_{i \notin \overline{\phi}} |\phi_i| \\
& = |\overline{\phi}|(\dim K +1) + (m- |\overline{\phi}|)(\dim L +1) \\
&=(\dim M +1)(\dim K +1) + (m- \dim M -1) (\dim L +1)
\end{align*}
for any facet $\phi \in \facet{\polyp}$ since $M$ is pure.
Therefore, we conclude that $\polyp$ is pure.
\end{proof}

\section{Shellability of Polyhedral Joins}
\label{shellability of polyhedral joins}
We first present two sufficient conditions for $\polyp$ being shellable.
\begin{theorem}
\label{sufficient}
Let $M$ be an arbitrary simplicial complex and $K \supset L$ be a pair of simplicial complexes. Suppose that $K, L$ satisfy the following three conditions.
\begin{enumerate}
\item $K$ is shellable,
\item $\facet{L} \subset \facet{K}$, and
\item there exists a shelling $<_K$ on $K$ such that $\tau <_K \sigma$ for any $\tau \in \facet{L}$ and any $\sigma \in \facet{K} \setminus \facet{L}$.
\end{enumerate}
Then $\polyp$ is shellable.
\end{theorem}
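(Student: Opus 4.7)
The plan is to construct an explicit lexicographic shelling built from the given shelling $<_K$ of $K$. Since assumption (2) gives $\facet{L} \subset \facet{K}$, Remark \ref{facet description remark} tells us that a facet of $\polyp$ is precisely a tuple $\phi=(\phi_1, \ldots, \phi_m)$ with each $\phi_i \in \facet{K}$ and $\overline{\phi} \in M$. I would order these facets lexicographically with respect to $<_K$: for two distinct facets $\phi$ and $\psi$, letting $i_0$ be the smallest index with $\phi_{i_0} \neq \psi_{i_0}$, I declare $\phi <_{PJ} \psi$ if and only if $\phi_{i_0} <_K \psi_{i_0}$. This is a total order on the facets of $\polyp$.

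To verify the criterion of Lemma \ref{shellable definition}, suppose $\phi <_{PJ} \psi$ with smallest differing index $i_0$. Applying Lemma \ref{shellable definition} to the shelling $<_K$ of $K$ for the pair $(\phi_{i_0}, \psi_{i_0})$ produces a facet $\sigma \in \facet{K}$ with $\sigma <_K \psi_{i_0}$ and a vertex $x \in \psi_{i_0} \setminus \phi_{i_0}$ such that $\sigma \cap \psi_{i_0} = \psi_{i_0} \setminus \{x\}$. I then let $\phi'$ be $\psi$ with its $i_0$-th component replaced by $\sigma$. Directly from the construction, $\phi' <_{PJ} \psi$, $x \in \psi \setminus \phi$, and $\phi' \cap \psi = \psi \setminus \{x\}$, so the entire argument reduces to showing that $\phi'$ is actually a facet of $\polyp$.

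The main obstacle, and the only step that uses assumption (3), is verifying $\overline{\phi'} \in M$. I split into two cases according to $\psi_{i_0}$. If $\psi_{i_0} \in \facet{L}$, then since $\sigma <_K \psi_{i_0}$ in a shelling where every facet of $L$ precedes every facet in $\facet{K} \setminus \facet{L}$, the facet $\sigma$ must also lie in $\facet{L}$; hence $i_0 \notin \overline{\phi'}$ and $\overline{\phi'} = \overline{\psi} \in M$. If $\psi_{i_0} \notin L$, then $\overline{\phi'}$ equals either $\overline{\psi}$ (when $\sigma \notin L$) or $\overline{\psi} \setminus \{i_0\}$ (when $\sigma \in L$), and both lie in $M$ by downward-closure. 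This shows $\phi'$ is a facet and completes the shelling; the rest of the argument is the formal machinery of lexicographic orderings.
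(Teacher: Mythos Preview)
Your proof is correct and follows essentially the same approach as the paper: both construct the lexicographic order on facets of $\polyp$ induced by $<_K$, replace the $i_0$-th component of $\psi$ by the facet $\sigma$ provided by Lemma~\ref{shellable definition}, and use condition~(3) in exactly the same two-case analysis to check that the resulting tuple has $\overline{\phi'} \in M$ (hence is a facet by Remark~\ref{facet description remark}). The only cosmetic difference is notation.
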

\begin{proof}
We set $V(M) = [m]$.
Consider a linear order $<$ on $\facet{\polyp}$ such that $\phi < \psi$ if and only if 
\begin{align*}
\phi_1 = \psi_1, \ldots, \phi_{j-1} = \psi_{j-1}, \phi_j <_K \psi_j
\end{align*}
for some $j \in [m]$. We show that this order is a shelling on $\polyp$.

Let $\phi, \psi$ be facets of $\polyp$ such that $\phi_1 = \psi_1, \ldots, \phi_{j-1} = \psi_{j-1}, \phi_j <_K \psi_j$. Since $<_K$ is a shelling on $K$, there exists $\sigma \in \facet{K}$ and $x \in \psi_j \setminus \phi_j$ such that $\sigma <_K \psi_j$ and $\sigma \cap \psi_j = \psi_j \setminus \{x\}$ by Lemma \ref{shellable definition}. Define $\chi \subset \bigsqcup_{i \in [m]} V(K)$ by
\begin{align*}
\chi_i = \left\{
\begin{aligned}
& \psi_i & &(i \neq j) \\
& \sigma & &(i =j).
\end{aligned} \right.
\end{align*}
If $\psi_j \in \facet{K} \setminus \facet{L}$, then we have $\overline{\chi} \subset \overline{\psi} \in M$. 
If $\psi_j \in \facet{L}$, then by condition (3), $\sigma$ must be a facet of $L$ since $\sigma <_K \psi_j$. So, we have $\overline{\chi} = \overline{\psi} \in M$. 
In both cases, we get that $\overline{\chi} \in M$, which implies that $\chi$ is a simplex of $\polyp$. Furthermore, $\chi$ is a facet of $\polyp$ by Remark \ref{facet description remark}.

We have $\chi < \psi$ because
\begin{align*}
\chi_1 =\psi_1, \ldots, \chi_{j-1} = \psi_{j-1}, \chi_j = \sigma <_K \psi_j.
\end{align*}
Moreover, we have
\begin{align*}
\chi_i \cap \psi_i &= \left\{
\begin{aligned}
& \psi_i & &(i \neq j) \\
& \sigma \cap \psi_j & &(i=j) 
\end{aligned} \right. \\
&=\left\{
\begin{aligned}
& \psi_i & &(i \neq j) \\
& \psi_j \setminus \{x\} & &(i=j) .
\end{aligned} \right.
\end{align*}
Hence, we obtain
\begin{align*}
\chi \cap \psi = \psi \setminus \{x\}.
\end{align*}
Finally, we have $x \in \psi \setminus \phi$ since $x \in \psi_j \setminus \phi_j$.
Therefore, by the above argument, we get $\chi \in \facet{\polyp}$ and $x \in \psi \setminus \phi$ such that
$\chi < \psi$ and  $\chi \cap \psi = \psi \setminus \{x\}$. By Lemma \ref{shellable definition}, we conclude that $\polyp$ is shellable.
\end{proof}

The following claim indicates that we cannot drop condition (2) in Theorem \ref{sufficient} for some $M$.
\begin{claim}
\label{nonshellable}
Let $(K, L)$ be a pair of simplicial complexes such that $\facet{L} \setminus \facet{K} \neq \emptyset$. Then for $M = \langle \{1,2\}, \{3,4\} \rangle$, $\polyp$ is not shellable.
\end{claim}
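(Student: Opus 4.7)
My plan is to assume $\polyp$ admits a shelling and to derive a contradiction by examining the first facets of two distinguished families.

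First I would fix $\tau \in \facet{L} \setminus \facet{K}$ and choose $\sigma \in \facet{K}$ with $\sigma \supsetneq \tau$; since $\tau$ is maximal in $L$, automatically $\sigma \notin \facet{L}$. Using Proposition \ref{facet description} together with the fact that $M$ has exactly two facets, $\{1,2\}$ and $\{3,4\}$, I would define a facet $F$ of $\polyp$ to be a \emph{left facet} if $F = (F_1, F_2, \tau, \tau)$ with $F_1, F_2 \in \facet{K}$ strictly containing $\tau$, and a \emph{right facet} if $F = (\tau, \tau, F_3, F_4)$ analogously. Verifying directly from Proposition \ref{facet description} that $(\sigma, \sigma, \tau, \tau)$ and $(\tau, \tau, \sigma, \sigma)$ are facets shows that both families are nonempty.

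Granted a shelling, I would let $\nu$ be the first left facet in it and $\mu$ the first right facet, and assume without loss of generality that $\nu$ precedes $\mu$. Applying Lemma \ref{shellable definition} to $(F_i, F_k) = (\nu, \mu)$ produces $F_j$ before $\mu$ and $x \in \mu \setminus \nu$ with $F_j \cap \mu = \mu \setminus \{x\}$. Since $\nu_1, \nu_2 \supseteq \tau = \mu_1 = \mu_2$ and $\nu_3 = \nu_4 = \tau \subseteq \mu_3, \mu_4$, a coordinate-by-coordinate inspection gives $\mu \setminus \nu \subseteq (\mu_3 \setminus \tau) \sqcup (\mu_4 \setminus \tau)$; WLOG $x \in \mu_3 \setminus \tau$. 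Unwinding the equation forces $(F_j)_4 = \mu_4$ (hence $4 \in \overline{F_j}$, so $\overline{F_j} \subseteq \{3, 4\}$) and then $(F_j)_1 = (F_j)_2 = \tau$. A case analysis on $(F_j)_3$ will show that it must lie in $\facet{K}$ and strictly contain $\tau$, making $F_j$ itself a right facet and contradicting that $\mu$ is the first right facet.

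The hard part is ruling out $(F_j)_3 \in \facet{L}$ in the analysis of the last coordinate: such a choice would force $(F_j)_3 = \tau$ (since $\tau$ is the only facet of $L$ containing $\tau$, and $(F_j)_3 \supseteq \mu_3 \setminus \{x\} \supseteq \tau$), which gives $\overline{F_j} = \{4\}$, whereupon the third condition of Proposition \ref{facet description} fails precisely because $\{3,4\} \in M$---so $(F_j)_3$ could be enlarged within $\polyp$, violating maximality. The symmetric case ($\mu$ preceding $\nu$) is settled by swapping the roles of the two families.
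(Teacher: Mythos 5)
Your argument is correct and follows essentially the same route as the paper's proof: the author likewise fixes $\tau^0 \in \facet{L}\setminus\facet{K}$, takes the shelling-minimal facets $\Phi$ and $\Psi$ of the two symmetric families (with $\tau^0$ in coordinates $\{1,2\}$ resp.\ $\{3,4\}$), applies Lemma \ref{shellable definition} to the later of the two, and derives a contradiction with minimality, ruling out the degenerate coordinate $\tau^0$ via the third condition of Proposition \ref{facet description} exactly as you do. The only differences are notational (your ``left/right facets'' versus the paper's $\Phi,\Psi$).
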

\begin{proof}
We assume that $\polyp$ is shellable and deduce a contradiction. Take $\tau^0 \in \facet{L} \setminus \facet{K}$. Fix a shelling on $\polyp$ and define $\Phi, \Psi \in \facet{\polyp}$ by
\begin{align*}
\Phi &= \min \left\{ \phi \in \facet{\polyp} \ \middle| \ \phi_1 = \phi_2 =\tau^0, \phi_3 \supsetneq \tau^0, \phi_4 \supsetneq \tau^0 \right\} ,\\
\Psi &= \min \left\{ \psi \in \facet{\polyp} \ \middle| \ \psi_1 \supsetneq \tau^0, \psi_2 \supsetneq \tau^0, \psi_3 = \psi_4 = \tau^0 \right\} .
\end{align*}
Note that $\Phi, \Psi$ are well-defined because there exists $\sigma \in \facet{K} \setminus \facet{L}$ such that $\sigma \supsetneq \tau^0$ since $\tau^0 \in \facet{L} \setminus \facet{K}$. 

We may suppose that $\Phi < \Psi$. Then there exists $\chi < \Psi$, $j \in [m]$ and $x \in \Psi_j \setminus \Phi_j$ such that $\chi \cap \Psi = \Psi \setminus \{x\}$. Since $\Psi_j \setminus \Phi_j \neq \emptyset$, we obtain $j \in \{1,2\}$. Here, we set $j=1$. 
Then for $i =2,3,4$, we have $\chi_i \cap \Psi_i = \Psi_i$, namely $\chi_i \supset \Psi_i$. So, we get $\chi_2 = \Psi_2 \in \facet{K} \setminus \facet{L}$ since $\Psi_2$ is a facet of $K$. Thus, $2 \in \overline{\chi}$. This implies that $\chi_i \in \facet{L}$ for $i=3,4$. Therefore, we obtain $\chi_i = \tau^0$ for $i=3,4$ since $\chi_i \supset \Psi_i = \tau^0$. 

On the other hand, we have $x \in \Psi_1 \setminus \Phi_1 = \Psi_1 \setminus \tau^0$ since $\Phi_1 = \tau^0$. Then it follows from $\tau^0 \subset \Psi_1$ that $\tau^0 \subset \Psi_1 \setminus \{x\} = \chi_1 \cap \Psi_1 \subset \chi_1$. 
If $\chi_1= \tau^0$, then we have $\overline{\chi} = \{2\}$ and $\chi_1 \in \facet{L} \setminus \facet{K}$. This contradicts to Proposition \ref{facet description} since $\chi$ is a facet of $\polyp$. So, we get $\chi_1 \supsetneq \tau^0$. However, this is also a contradiction to the minimality of $\Psi$. Therefore, we conclude that $\polyp$ is not shellable.
\end{proof}

\begin{theorem}
\label{shellable sufficient 2}
Let $M$ be a simplicial complex on $[m]$ and $\pair = \{(K_i, L_i)\}_{i \in [m]}$ be a family of pairs of simplicial complexes. 
Suppose that $M$ is shellable and that for any $i \in [m]$, 
\begin{itemize}
\item there exists $\alpha_i \in \facet{K_i}$ such that $\facet{L_i} \subset \{ \alpha_i \setminus \{x\} \ |\ x \in \alpha_i\}$, and
\item $K_i$ is shellable with a shelling $<_i$ such that $\alpha_i$ is the minimum element with respect to $<_i$.
\end{itemize}
Then $\poly$ is shellable.
\end{theorem}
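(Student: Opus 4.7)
The plan is to combine the given shelling of $M$ with lex shellings of the joins $\pair^{*S}$ for each facet $S$ of $M$. As preparation, since every facet of $L_i$ is a proper subset of $\alpha_i\in\facet{K_i}$, we have $\facet{L_i}\cap\facet{K_i}=\emptyset$. Combined with Proposition \ref{facet description}, the facets of $\poly$ are exactly the tuples $\phi$ with $\overline{\phi}\in\facet{M}$, $\phi_i\in\facet{K_i}$ for $i\in\overline{\phi}$, and $\phi_i\in\facet{L_i}$ for $i\notin\overline{\phi}$. Next, each $L_i$ is shellable under any linear order of its facets: any two facets $\alpha_i\setminus\{a\}$ and $\alpha_i\setminus\{b\}$ intersect in the codimension-one face $\alpha_i\setminus\{a,b\}$. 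Consequently, for every $S\in\facet{M}$, $\pair^{*S}$ is a join of shellable complexes, and we equip it with the standard lex shelling, concatenating the shellings $<_i$ on $K_i$ for $i\in S$ with arbitrary orders on $\facet{L_i}$ for $i\notin S$.

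We fix a shelling $S_1,\ldots,S_p$ of $M$ and order $\facet{\poly}$ by declaring $\phi<\psi$ iff either $\overline{\phi}=S_a$, $\overline{\psi}=S_b$ with $a<b$, or $\overline{\phi}=\overline{\psi}$ and $\phi$ precedes $\psi$ in the lex shelling of $\pair^{*\overline{\phi}}$. We verify the criterion of Lemma \ref{shellable definition}. The intra-$S$ case $\overline{\phi}=\overline{\psi}$ follows directly from the lex shelling of $\pair^{*\overline{\phi}}$, since any $\chi$ produced there is automatically a facet of $\poly$ with $\overline{\chi}=\overline{\phi}$. For the cross-$S$ case $\overline{\phi}=S_a$, $\overline{\psi}=S_b$ with $a<b$, apply Lemma \ref{shellable definition} to $M$ to get $c<b$ and $j\in S_b\setminus S_a$ with $S_c\cap S_b=S_b\setminus\{j\}$. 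Then $\psi_j\in\facet{K_j}$ and $\phi_j\in\facet{L_j}$, so $\phi_j=\alpha_j\setminus\{y\}$ for some $y\in\alpha_j$.

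We split on whether $\psi_j=\alpha_j$. If $\psi_j\neq\alpha_j$, then $\alpha_j<_j\psi_j$, and Lemma \ref{shellable definition} applied to $<_j$ yields $\sigma<_j\psi_j$ and $x\in\psi_j\setminus\alpha_j$ with $\sigma\cap\psi_j=\psi_j\setminus\{x\}$; we take $\chi$ obtained from $\psi$ by replacing the $j$-th coordinate with $\sigma$ (keeping $\overline{\chi}=S_b$), so $\chi<\psi$ in the intra-$S_b$ order, $\chi\cap\psi=\psi\setminus\{x\}$, and $x\notin\alpha_j\supseteq\phi_j$. If $\psi_j=\alpha_j$, we take $\chi$ with $\overline{\chi}=S_c$ defined by $\chi_j=\phi_j$, $\chi_i=\psi_i$ for $i\in S_c\cap S_b$, $\chi_i=\alpha_i$ for $i\in S_c\setminus S_b$ (where $\psi_i\in\facet{L_i}$ lies in $\alpha_i$, so $\chi_i\cap\psi_i=\psi_i$), and $\chi_i=\psi_i$ for $i\notin S_c\cup S_b$; then $\chi<\psi$ since $c<b$, and $\chi\cap\psi=\psi\setminus\{y\}$ with $y\in\psi_j\setminus\phi_j$.

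The main obstacle is this cross-$S$ case, where $\chi$ must simultaneously satisfy the codimension-one intersection condition with $\psi$ and lie earlier in our global order. The assumption that $\alpha_i$ is the minimum of $<_i$ is essential in both subcases: it permits enlarging any $\psi_i\in\facet{L_i}$ to $\alpha_i\in\facet{K_i}$ without shrinking $\chi_i\cap\psi_i$, and it supplies the starting point needed to apply Lemma \ref{shellable definition} on $K_j$ against $\alpha_j$.
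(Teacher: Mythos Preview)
Your proof is correct and follows essentially the same approach as the paper's own proof: order facets first by the shelling of $M$ on $\overline{\phi}$, then lexicographically within each fiber, and in the cross-fiber case split on whether $\psi_j=\alpha_j$, constructing exactly the same $\chi$ in each subcase. The only cosmetic difference is that you package the intra-$S$ case as the standard lex shelling of a join of shellable complexes, whereas the paper writes out the two subcases ($j\in\overline{\phi}$ versus $j\notin\overline{\phi}$) explicitly.
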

\begin{proof}
We fix a shelling on $M$ and denote by $<_M$. Take an arbitrary linear order $<'_i$ on $\facet{L_i}$. 
Here we remark that we have $\facet{K_i} \cap \facet{L_i} = \emptyset$ for any $i \in [m]$ since we have $\tau \subsetneq \alpha_i$ for any $\tau \in \facet{L_i}$.

Let $\phi, \psi$ be facets of $\poly$. Consider a linear order $<$ on $\facet{\poly}$ such that $\phi < \psi$ if and only if 
\begin{itemize}
\item $\overline{\phi} <_M \overline{\psi}$, or
\item $\overline{\phi} = \overline{\psi}$ and $\phi_1 = \psi_1, \ldots, \phi_{j-1} = \psi_{j-1}, \phi_j <_j \psi_j$ for some $j \in \overline{\phi}$, or
\item $\overline{\phi} = \overline{\psi}$ and $\phi_1 = \psi_1, \ldots, \phi_{j-1} = \psi_{j-1}, \phi_j <'_j \psi_j$ for some $j \notin \overline{\phi}$.
\end{itemize}
We show that this order is a shelling on $\poly$.

First, suppose that $\overline{\phi} <_M \overline{\psi}$. Then, by Lemma \ref{shellable definition}, there exists $T \in \facet{M}$ and $j \in \overline{\psi} \setminus \overline{\phi}$ such that $T<_M \overline{\psi}$ and $T \cap \overline{\psi} = \overline{\psi} \setminus \{j\}$.
It follows from $j \in \overline{\psi} \setminus \overline{\phi}$ that $\psi_j \in \facet{K_j}$ and $\phi_j \in \facet{L_j}$.

If $\psi_j$ is not minimum with respect to $<_j$, namely $\alpha_j <_j \psi_j$ in $\facet{K_j}$, then there exists $\sigma \in \facet{K_j}$ and $x \in \psi_j \setminus \alpha_j$ such that $\sigma <_j \psi_j$ and $\sigma \cap \psi_j = \psi_j \setminus \{x\}$. By the assumption of the theorem, $\phi_j \in \facet{L_j}$ is a face of $\alpha_j$.
So, we get $x \in \psi_j \setminus \phi_j \subset \psi \setminus \phi$.
Now we define $\chi \subset \bigsqcup_{i \in [m]} V(K_i)$ by
\begin{align*}
\chi_i = \left\{
\begin{aligned}
& \psi_i & &(i \neq j) \\
& \sigma & &(i =j).
\end{aligned} \right.
\end{align*}
Since $\psi_j, \sigma \in \facet{K_j}$, we have $\overline{\chi} = \overline{\psi} \in \facet{M}$. 
Thus, by Proposition \ref{facet description}, $\chi$ is a facet of $\poly$. 
Furthermore, we have $\chi < \psi$ since
\begin{align*}
\chi_1=\psi_1, \ldots, \chi_{j-1} = \psi_{j-1}, \chi_j = \sigma <_j \psi_j.
\end{align*}
Moreover, we have
\begin{align*}
\chi_i \cap \psi_i &= \left\{
\begin{aligned}
& \psi_i & &(i \neq j) \\
& \sigma \cap \psi_j & &(i=j) 
\end{aligned} \right. \\
&=\left\{
\begin{aligned}
& \psi_i & &(i \neq j) \\
& \psi_j \setminus \{x\} & &(i=j) ,
\end{aligned} \right.
\end{align*}
namely
\begin{align*}
\chi \cap \psi = \psi \setminus \{x\}.
\end{align*}
By the above argument, we get $\chi \in \facet{\poly}$ and $x \in \psi \setminus \phi$ such that
$\chi < \psi$ and  $\chi \cap \psi = \psi \setminus \{x\}$. 

If $\psi_j = \alpha_j$, then it follows from the assumption of the theorem that there exists $x \in \alpha_j$ such that $\phi_j = \alpha_j \setminus \{x\}$. In this case, we define $\chi \subset \bigsqcup_{i \in [m]} V(K_i)$ by
\begin{align*}
\chi_i = \left\{
\begin{aligned}
&\psi_i \in \facet{K_i} & &(i \in T \cap \overline{\psi}) \\
&\phi_j \in \facet{L_j} & &(i = j), \\
&\alpha_i \in \facet{K_i} & &(i \in T \setminus \overline{\psi}), \\
&\psi_i \in \facet{L_i} & &(i \in [m] \setminus (T \cup \overline{\psi})). \\
\end{aligned} \right.
\end{align*}
Then we have
\begin{align*}
\overline{\chi} =  (T \cap \overline{\psi}) \cup (T \setminus \overline{\psi}) = T.
\end{align*}
Hence, $\chi$ is a facet of $\poly$. It follows from $T<_M \overline{\psi}$ that $\overline{\chi} <_M \overline{\psi}$. Furthermore, since $\psi_i$ is a face of $\alpha_i$ if $\psi_i \in \facet{L_i}$, we have
\begin{align*}
\chi_i \cap \psi_i &= \left\{
\begin{aligned}
& \psi_i & &(i \in (T \cap \overline{\psi}) \cup ([m] \setminus (T \cup \overline{\phi}))), \\
& \phi_j \cap \psi_j & &(i=j) ,\\
& \alpha_i \cap \psi_i & &( i \in T \setminus \overline{\psi})
\end{aligned} \right. \\
&=\left\{
\begin{aligned}
& \psi_i & &(i \neq j) \\
& \psi_j \setminus \{x\} & &(i=j) .
\end{aligned} \right.
\end{align*}
The last equality follows from
\begin{align*}
\phi_j \cap \psi_j = (\alpha_j \setminus \{x\}) \cap \alpha_j = \alpha_j \setminus \{x\} =\psi_j \setminus \{x\}.
\end{align*}
By the above argument, we get $\chi \in \facet{\poly}$ and $x \in \psi \setminus \phi$ such that $\chi < \psi$ and $\chi \cap \psi = \psi \setminus \{x\}$. 

Next, suppose that $\overline{\phi} = \overline{\psi}$ and $\phi_1 = \psi_1, \ldots, \phi_{j-1} = \psi_{j-1}, \phi_j <_j \psi_j$ for some $j \in \overline{\phi}$. There exists $\sigma \in \facet{K_j}$ and $x \in \psi_j \setminus \phi_j$ such that $\sigma <_j \psi_j$ and $\sigma \cap \psi_j = \psi_j \setminus \{x\}$. We define $\chi \subset \bigsqcup_{i \in [m]} V(K_i)$ by
\begin{align*}
\chi_i = \left\{
\begin{aligned}
& \psi_i & &(i \neq j) \\
& \sigma & &(i =j).
\end{aligned} \right.
\end{align*}
We have $\psi_j \in \facet{K_j}$ since $j \in \overline{\phi} =\overline{\psi}$. So, it follows from $\sigma \in \facet{K_j}$ that $\overline{\chi} = \overline{\psi}$.
Hence, $\chi$ is a facet of $\poly$. 
By the same argument as the first case, we see that $\chi \in \facet{\poly}$ and $x \in \psi \setminus \phi$ satisfy
$\chi < \psi$ and  $\chi \cap \psi = \psi \setminus \{x\}$. 

Finally, suppose that $\overline{\phi} = \overline{\psi}$ and $\phi_1 = \psi_1, \ldots, \phi_{j-1} = \psi_{j-1}, \phi_j <'_j \psi_j$ for some $j \notin \overline{\phi}$. 
By the assumption of the theorem, there exists $x,y \in \alpha_j$ ($x \neq y$) such that $\phi_j = \alpha_j \setminus \{x \}$, $\psi_j = \alpha_j \setminus \{ y \}$. 
So, we have $\phi_j \cap \psi_j = \alpha_j \setminus \{x, y\} = \psi_j \setminus \{x\}$ since $x \neq y$. 
Here we define $\chi \subset \bigsqcup_{i \in [m]} V(K_i)$ by
\begin{align*}
\chi_i = \left\{
\begin{aligned}
& \psi_i & &(i \neq j) \\
& \phi_j & &(i =j).
\end{aligned} \right.
\end{align*}
We have $\phi_j, \psi_j \in \facet{L_j}$ since $j \in \overline{\phi} = \overline{\psi}$. So, it follows that $\overline{\chi} = \overline{\psi}$.
Hence, $\chi$ is a facet of $\poly$. 
By the same argument as the first case, we see that $\chi \in \facet{\poly}$ and $x \in \psi \setminus \phi$ satisfy
$\chi < \psi$ and  $\chi \cap \psi = \psi \setminus \{x\}$. 

In all four cases above, we obtain $\chi \in \facet{\poly}$ and $x \in \psi \setminus \phi$ which satisfy $\chi < \psi$ and  $\chi \cap \psi = \psi \setminus \{x\}$. By Lemma \ref{shellable definition}, we conclude that $\poly$ is shellable.
\end{proof}

\begin{remark}
\label{sufficient2 remark}
Consider $M = \langle \{1\}, \{2\} \rangle$ and $(\underline{K}, \underline{L}) = \{(K_i,L_i)\}_{i=1,2}$ such that
\begin{align*}
K_i =\langle \{a_i,b_i\}, \{b_i, c_i\} \rangle , \ L_i =\langle \{a_i\}, \{c_i\} \rangle .
\end{align*}
Then we see that
\begin{align*}
\poly = &\langle \{a_1,b_1,a_2\}, \{a_1,b_1,c_2\}, \{b_1,c_1, a_2\}, \{b_1,c_1,c_2\},\\
&\ \{a_1, a_2, b_2\}, \{a_1, b_2, c_2\}, \{c_1, a_2, b_2\}, \{c_1, b_2, c_2\} \rangle
\end{align*}
(see Figure \ref{sufficient2 remark figure}) is shellable. However, there is no $\alpha_i \in \{ \{a_i,b_i\}, \{b_i, c_i\} \}$ such that $\{a_i\}, \{c_i\} \subset \alpha_i$. So, the condition that $\facet{L_i} \subset \{ \alpha_i \setminus \{x\} \in K \ |\ x \in \alpha_i\}$ for some $\alpha_i \in \facet{K_i}$ is not necessary for $\poly$ being shellable.

\begin{figure}[tb]
\begin{tabular}{ccccc}
\begin{tikzpicture}[scale=0.8]
\draw (1,1)--(1,3);
\node at (1.5,2) {$*$}; 
\node at (1,1) [vertex] {}; \node at (1,1) [left] {$c_1$}; 
\node at (1,2) [vertex] {}; \node at (1,2) [left] {$b_1$};
\node at (1,3) [vertex] {}; \node at (1,3) [left] {$a_1$};
\node at (2,1) [vertex] {}; \node at (2,1) [right] {$c_2$};
\node at (2,3) [vertex] {}; \node at (2,3) [right] {$a_2$};
\end{tikzpicture}
& &
\begin{tikzpicture}[scale=0.8]
\node at (1,1) {}; \node at (1,2) {$\cup$}; \node at (1,3) {};
\end{tikzpicture}
& &
\begin{tikzpicture}[scale=0.8]
\draw (2,1)--(2,3);
\node at (1.5,2) {$*$};
\node at (1,1) [vertex] {}; \node at (1,1) [left] {$c_1$};
\node at (2,2) [vertex] {}; \node at (2,2) [right] {$b_2$};
\node at (1,3) [vertex] {}; \node at (1,3) [left] {$a_1$};
\node at (2,1) [vertex] {}; \node at (2,1) [right] {$c_2$};
\node at (2,3) [vertex] {}; \node at (2,3) [right] {$a_2$};
\end{tikzpicture}
\end{tabular}
\caption{$\poly$ in Remark \ref{sufficient2 remark}}
\label{sufficient2 remark figure}
\end{figure}
\end{remark}

\begin{example}
Let $K$ be a simplicial complex on $[m]$ and $J=(j_1, \ldots, j_m)$ be a tuple of positive integers. Define families of pairs of simplicial complexes as follows:
\begin{align*}
(\underline{\Delta^{J-1}}, \underline{\partial \Delta^{J-1}} ) &=\{( \Delta^{j_i-1}, \partial \Delta^{j_i-1} )\}_{i \in [m]}, \\
(\underline{\mathrm{pt}^J}, \{\emptyset\}) &=\left\{ \left( {\bigsqcup}_{j_i} \mathrm{pt}, \{\emptyset\} \right) \right\}_{i \in [m]}.
\end{align*}
By Theorem \ref{shellable sufficient 2}, $\mathcal{Z}^*_K (\underline{\Delta^{J-1}}, \underline{\partial {\Delta^{J-1}}})$ and $\mathcal{Z}^*_K (\underline{\mathrm{pt}^{J}}, \{\emptyset\})$ are shellable if $K$ is shellable.
\end{example}

Next, we state a necessary condition for $\polyp$ being shellable under a certain assumption.
\begin{theorem}
\label{necessary1}
Let $M$ be a simplicial complex which is not a simplex and $K \supsetneq L$ be a pair of simplicial complexes such that $\facet{L} \subset \facet{K}$. If $\polyp$ is shellable, then both $K$ and $L$ are shellable.
\end{theorem}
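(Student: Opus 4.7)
The plan is to start from a shelling $<$ of $\polyp$ and pull it back along suitable embeddings $\facet{K} \hookrightarrow \facet{\polyp}$ and $\facet{L} \hookrightarrow \facet{\polyp}$, then verify the resulting linear orders against the criterion of Lemma \ref{shellable definition}.

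For the shellability of $K$, I would fix a facet $S \in \facet{M}$, a vertex $k \in S$, and, for each $i \in [m] \setminus \{k\}$, a facet $\tau_i^{(0)} \in \facet{L}$. To each $\sigma \in \facet{K}$ I associate $\phi^\sigma \in \polyp$ with $\phi^\sigma_k = \sigma$ and $\phi^\sigma_i = \tau_i^{(0)}$ for $i \neq k$; using $\facet{L} \subset \facet{K}$, Remark \ref{facet description remark} shows $\phi^\sigma \in \facet{\polyp}$. Order $\facet{K}$ by declaring $\sigma \prec_K \sigma'$ when $\phi^\sigma < \phi^{\sigma'}$. For such a pair, Lemma \ref{shellable definition} produces $\phi'' \in \facet{\polyp}$ with $\phi'' < \phi^{\sigma'}$ and $x \in \phi^{\sigma'} \setminus \phi^\sigma$ satisfying $\phi'' \cap \phi^{\sigma'} = \phi^{\sigma'} \setminus \{x\}$. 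Since $\phi^\sigma$ and $\phi^{\sigma'}$ only differ in coordinate $k$, the vertex $x$ lies in that coordinate. For $i \neq k$, the facet $\phi''_i$ of $K$ contains the facet $\tau_i^{(0)}$ of $K$ and therefore equals it, so $\phi'' = \phi^{\sigma''}$ for $\sigma'' := \phi''_k$. This yields $\sigma'' \prec_K \sigma'$, $x \in \sigma' \setminus \sigma$, and $\sigma'' \cap \sigma' = \sigma' \setminus \{x\}$, completing the shelling criterion for $K$.

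For $L$, I use that $M$ is not a simplex to choose $S \in \facet{M}$ together with $j_0 \in [m] \setminus S$. Since $K \supsetneq L$, extending any simplex of $K \setminus L$ to a facet of $K$ produces $\sigma_0 \in \facet{K} \setminus \facet{L}$. Fix also $\tau_i^{(0)} \in \facet{L}$ for $i \in [m] \setminus (S \cup \{j_0\})$. To $\tau \in \facet{L}$ I associate $\phi^\tau$ with $\phi^\tau_{j_0} = \tau$, $\phi^\tau_i = \sigma_0$ for $i \in S$, and $\phi^\tau_i = \tau_i^{(0)}$ otherwise; then $\overline{\phi^\tau} = S \in M$ and $\phi^\tau$ is a facet of $\polyp$. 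Pulling back the shelling yields an order $\prec_L$ on $\facet{L}$, and the same mechanism as for $K$ reduces the problem to showing that the auxiliary facet $\phi''$ has $\phi''_{j_0} \in \facet{L}$.

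The main obstacle is precisely this last step. One first checks that $\phi''_i = \phi^{\tau'}_i$ for $i \neq j_0$, so $\overline{\phi''} \supset S$; if $\phi''_{j_0}$ did not belong to $L$ then $\overline{\phi''}$ would equal $S \cup \{j_0\}$, which is not in $M$ by the maximality of the facet $S$. Hence $\phi''_{j_0}$ is a facet of $K$ that lies in $L$, and is therefore a facet of $L$. Setting $\tau'' := \phi''_{j_0}$ gives $\phi'' = \phi^{\tau''}$, closing the shelling criterion for $L$. This is the only place where the hypothesis that $M$ is not a simplex is invoked, namely to guarantee the existence of $j_0 \notin S$ with $S \cup \{j_0\} \notin M$.
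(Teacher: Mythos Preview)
Your proof is correct and follows the same overall strategy as the paper's: embed $\facet{K}$ and $\facet{L}$ into $\facet{\polyp}$ and pull back the shelling, checking the criterion of Lemma~\ref{shellable definition}. The implementation differs in one technical point. The paper does not fix the remaining coordinates but instead sets $\Phi^\sigma = \min\{\phi \in \facet{\polyp} \mid \phi_k = \sigma\}$ (and analogously $\Psi^\tau$ for $L$); then the auxiliary facet $\chi$ produced by the shelling criterion, which need not lie in the image of $\sigma \mapsto \Phi^\sigma$, still satisfies $\Phi^{\chi_k} \leq \chi < \Phi^\sigma$, which is enough. You instead use a rigid embedding with every other coordinate pinned to a fixed facet of $K$, and then the hypothesis $\facet{L} \subset \facet{K}$ (so that every $\phi''_i$ is a facet of $K$) forces $\phi''$ to agree with $\phi^{\sigma'}$ (resp.\ $\phi^{\tau'}$) outside the distinguished coordinate, so $\phi''$ genuinely lands in the image. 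Your version is a bit more direct in this setting; the paper's minimum trick is the more portable device, since it would continue to work in situations where no rigid embedding is available to force the auxiliary facet back into the image.
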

\begin{proof}
We set $V(M)=[m]$. Let $\pair = \{(K_i, L_i)\}_{i \in [m]}$ be a family of pairs of simplicial complexes such that $K_i=K$, $L_i=L$ for any $i \in [m]$. 
Fix a shelling $<$ on $\poly$. Take $S \in \facet{M}$ and $k \in [m]$ such that $k \notin S$. This is possible since $M$ is not a simplex. We prove that both $K_k$ and $L_k$ are shellable.

First, we prove that $K_k$ is shellable. For a facet $\sigma \in \facet{K_k}$, define $\Phi^\sigma \in \facet{\poly}$ by
\begin{align*}
\Phi^\sigma = \min \left\{\phi \in \facet{\poly} \ \middle|\ \phi_k = \sigma \right\}.
\end{align*}
Note that $\Phi^\sigma$ is well-defined since by Remark \ref{facet description remark}, there exists a facet $\phi \in \facet{\poly}$ such that
\begin{align*}
\phi_i = \left\{
\begin{aligned}
& \sigma & &(i =k) \\
& \tau & &(i \neq k),
\end{aligned} \right.
\end{align*}
where $\tau$ is an arbitrary facet of $L$. 
For $\sigma, \sigma' \in \facet{K_k}$, we define a relation $\sigma <_K \sigma'$ by $\Phi^\sigma < \Phi^{\sigma'}$. It is obvious that $<_K$ defines a linear order on $\facet{K_k}$. We prove that $<_K$ is a shelling on $K_k$.

For $\sigma', \sigma \in \facet{K_k}$ such that $\Phi^{\sigma'} < \Phi^\sigma$, there exists $\chi \in \facet{\poly}$ and $x \in \Phi^\sigma \setminus \Phi^{\sigma'}$ such that $\chi < \Phi^\sigma$ and $\chi \cap \Phi^\sigma = \Phi^\sigma \setminus \{x\}$. If $x \notin \Phi^\sigma_k$, then we have $\chi_k \cap \Phi^\sigma_k = \Phi^\sigma_k$, which means that $\Phi^\sigma_k \subset \chi_k$. By Proposition \ref{facet description}, $\Phi^\sigma_k$ and $\chi_k$ are facets of $K_k$ since $\facet{K_k} \cup \facet{L_k} = \facet{K_k}$. So, we obtain $\chi_k = \Phi^\sigma_k = \sigma$. This contradicts to the minimality of $\Phi^\sigma$. Hence, we get $x \in \Phi^\sigma_k$. 
Moreover, it follows from $x \in \Phi^\sigma \setminus \Phi^{\sigma'}$ that $x \in \Phi^\sigma_k \setminus \Phi^{\sigma'}_k = \sigma \setminus \sigma'$. 

By $\chi \cap \Phi^\sigma = \Phi^\sigma \setminus \{x\}$ and $x \in \Phi^\sigma_k$, we obtain $\chi_k \cap \sigma = \chi_k \cap \Phi^\sigma_k = \Phi^\sigma_k \setminus \{x\} = \sigma \setminus \{x\}$. 
Furthermore, we get $\Phi^{\chi_k} <\chi < \Phi^\sigma$. 
Therefore, $\chi_k \in \facet{K_k}$ and $x \in \sigma \setminus \sigma'$ satisfy $\chi_k <_K \sigma$ and $\chi_k \cap \sigma = \sigma \setminus \{x\}$. By Lemma \ref{shellable definition}, $<_K$ is a shelling on $K_k$.

Next, we prove that $L_k$ is shellable. There exists $\sigma^0 \in \facet{K_k} \setminus \facet{L_k}$ since $K \neq L$. For a facet $\tau \in \facet{L_k}$, define $\Psi^\tau \in \facet{\poly}$ by
\begin{align*}
\Psi^\tau = \min \left\{\psi \in \facet{\poly} \ \middle|\ \psi_i = \sigma^0 \text{ for any } i \in S, \text{ and } \psi_k=\tau \right\}.
\end{align*}
Note that $\Psi^\tau$ is well-defined. This is because there exists a facet $\psi \in \facet{\poly}$ defined by
\begin{align*}
\psi_i = \left\{
\begin{aligned}
& \sigma^0 & &(i \in S) \\
& \tau & &(i \notin S).
\end{aligned} \right.
\end{align*}
For $\tau, \tau' \in \facet{L_k}$, we define a relation $\tau <_L \tau'$ by $\Psi^\tau < \Psi^{\tau'}$. It is obvious that $<_L$ defines a linear order on $\facet{L_k}$. We prove that $<_L$ is a shelling on $L_k$.

For $\tau', \tau \in \facet{L_k}$ such that $\Psi^{\tau'} < \Psi^\tau$, there exists $\chi \in \facet{\poly}$ and $x \in \Psi^\tau \setminus \Psi^{\tau'}$ such that $\chi < \Psi^\tau$ and $\chi \cap \Psi^\tau = \Psi^\tau \setminus \{x\}$. Let $j \in [m]$ be a vertex of $M$ such that $x \in \Psi^\tau_j \setminus \Psi^{\tau'}_j$. Then we have $j \notin S$ since $\Psi^\tau_j \neq \Psi^{\tau'}_j$. 
So, for any $i \in S$, we get $\chi_i \cap \Psi^\tau_i = \Psi^\tau_i$, namely $\chi_i \supset \Psi^\tau_i = \sigma^0$. It follows from $\sigma^0 \in \facet{K_k}$ that $\chi_i = \sigma^0$ for any $i \in S$. Hence, by the minimality of $\Psi^\tau$, we get $\chi_k \neq \tau$.
On the other hand, we get $\overline{\chi} =S$ since $S$ is a facet of $M$ and $S \subset \overline{\chi}$. So, we have $k \notin S = \overline{\chi}$, which means that $\chi_k \in \facet{L_k}$. Therefore, if $j \neq k$, we obtain $\chi_k \supsetneq \tau$ and $\chi_k, \tau \in \facet{L_k}$, a contradiction. 
Thus, we conclude that $j=k$. Hence, we get $x \in \Psi^\tau_k \setminus \Psi^{\tau'}_k$ and $\chi_k \cap \Psi^\tau_k = \Psi^\tau_k \setminus \{x\}$, namely $x \in \tau \setminus \tau'$ and $\chi_k \cap \tau = \tau \setminus \{x\}$. 
Moreover, we have $\Psi^{\chi_k} <\chi$ since $\chi_i = \sigma^0$ for any $i \in S$ and $\chi_k \in \facet{L_k}$. Hence, by $\chi < \Psi^\tau$, we obtain $\Psi^{\chi_k} < \Psi^{\tau}$, namely $\chi_k <_L \tau$. By Lemma \ref{shellable definition}, $<_L$ is a shelling on $L_k$.
\end{proof}

\begin{remark}
\label{necessary1 remark}
In general, the shellability of $\poly$ does not imply the shellability of each $K_i$. For example, consider $M = \langle \{1\}, \{2\} \rangle$ and $(\underline{K}, \underline{L}) = \{(K_i,L_i) \}_{i=1,2}$ such that
\begin{alignat*}{3}
&K_1 =\langle \{a,b\},\{c,d \} \rangle , &\  &L_1 =\langle \{b\}, \{c, d\} \rangle ,\\
&K_2 =\langle \{e,f\} \rangle , &\  & L_2 =\langle \{f\} \rangle.
\end{alignat*}
Then we see that
\begin{align*}
\poly = \langle \{c,d,e,f\}, \{b,e,f\}, \{a,b,f\} \rangle
\end{align*}
(see Figure \ref{necessary1 remark figure}) is shellable. However, $K_1$ is not shellable.

\begin{figure}[tb]
\begin{tabular}{ccccc}
\begin{tikzpicture}[scale=0.8]
\draw (1,1)--(1,2) (1,3)--(1,4);
\node at (1.5,2.5) {$*$};
\node at (1,1) [vertex] {}; \node at (1,1) [left] {$d$};
\node at (1,2) [vertex] {}; \node at (1,2) [left] {$c$};
\node at (1,3) [vertex] {}; \node at (1,3) [left] {$b$};
\node at (1,4) [vertex] {}; \node at (1,4) [left] {$a$};
\node at (2,2) [vertex] {}; \node at (2,2) [right] {$f$};
\end{tikzpicture}
& &
\begin{tikzpicture}[scale=0.8]
\node at (1,1) {}; \node at (1,2.5) {$\cup$}; \node at (1,4) {};
\end{tikzpicture}
& &
\begin{tikzpicture}[scale=0.8]
\draw (2,2)--(2,3) (1,1)--(1,2);
\node at (1.5,2.5) {$*$};
\node at (1,1) [vertex] {}; \node at (1,1) [left] {$d$};
\node at (1,2) [vertex] {}; \node at (1,2) [left] {$c$};
\node at (1,3) [vertex] {}; \node at (1,3) [left] {$b$};
\node at (1,4) {};
\node at (2,2) [vertex] {}; \node at (2,2) [right] {$f$};
\node at (2,3) [vertex] {}; \node at (2,3) [right] {$e$};
\end{tikzpicture}
\end{tabular}
\caption{$\poly$ in Remark \ref{necessary1 remark}}
\label{necessary1 remark figure}
\end{figure}
\end{remark}

\begin{remark}
The shellability of $\polyp$ does not imply $\facet{L} \subset \facet{K}$. 
For example, consider $M = \langle \{1\}, \{2\} \rangle$ and $(\underline{K}, \underline{L}) = \{(K_i,L_i)\}_{i=1,2}$ such that
\begin{align*}
K_i =\langle \{a_i,b_i\} \rangle , \ L_i =\langle \{b_i\} \rangle .
\end{align*}
Then we see that
\begin{align*}
\poly = \langle \{a_1,b_1,b_2\}, \{b_1,a_2, b_2\} \rangle
\end{align*}
is shellable. However, $\{b_i\} \in \facet{L_i} \setminus \facet{K_i}$ for $i=1,2$.  
\end{remark}

Even though $(K, L)$ satisfies that $K$ is shellable, $L$ is shellable and $\facet{L} \subset \facet{K}$, $\polyp$ is not necessarily shellable.
In order to show this, we prove another necessary condition.
\begin{theorem}
\label{necessary2}
Let $M$ be a simplicial complex which is not a simplex and $K \supsetneq L$ be a pair of simplicial complexes. Suppose that $\polyp$ is shellable. Then there exists a pair $(\sigma, \tau)$ of $\sigma \in \facet{K} \setminus \facet{L}$ and $\tau \in \facet{L}$ such that 
\begin{align*}
|\sigma \cap \tau| = \max_{\rho \in \facet{K} \setminus \facet{L}} |\rho| -1 .
\end{align*}
\end{theorem}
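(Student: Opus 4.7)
The plan is to apply the shelling condition to a carefully chosen facet of $\polyp$. Set $d = \max_{\rho \in \facet{K} \setminus \facet{L}} |\rho|$ (well defined since $K \supsetneq L$), pick a facet $S \in \facet{M}$, and define
\[
\Omega_S = \{\Phi \in \facet{\polyp} : \Phi_j \in \facet{K} \setminus \facet{L} \text{ and } |\Phi_j| = d \text{ for all } j \in S\}.
\]
By Proposition \ref{facet description}, $\Omega_S$ is non-empty: for $\sigma^0 \in \facet{K} \setminus \facet{L}$ with $|\sigma^0| = d$ and any $\tau \in \facet{L}$, placing $\sigma^0$ in every slot of $S$ and $\tau$ in every slot outside $S$ gives a facet lying in $\Omega_S$. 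Fix a shelling of $\polyp$ and let $\Psi^{*}$ be the earliest element of $\Omega_S$ in the shelling order.

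Assume first that $\Psi^{*}$ is not the first facet of the shelling. Lemma \ref{shellable definition} then provides an earlier facet $\chi$ and a vertex $x \in \Psi^{*}$ with $\chi \cap \Psi^{*} = \Psi^{*} \setminus \{x\}$; by minimality $\chi \notin \Omega_S$. Let $i_0$ be the slot containing $x$, so that $\chi_j \supset \Psi^{*}_j$ for every $j \neq i_0$. For $j \in S \setminus \{i_0\}$ the element $\Psi^{*}_j$ is a facet of $K$ with $\Psi^{*}_j \notin L$, which forces $\chi_j = \Psi^{*}_j$; consequently $i_0 \in S$, for otherwise $\chi$ would belong to $\Omega_S$. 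The remaining step analyzes $\chi_{i_0}$, which contains $\Psi^{*}_{i_0} \setminus \{x\}$ (of size $d-1$) but not $x$: membership in $\facet{K} \setminus \facet{L}$ of size $d$ would place $\chi$ back into $\Omega_S$; any strictly smaller size would identify $\chi_{i_0}$ with the proper subface $\Psi^{*}_{i_0} \setminus \{x\}$ of $\Psi^{*}_{i_0} \in K$ and contradict $\chi_{i_0} \in \facet{K}$; larger size contradicts the maximality of $d$. Hence $\chi_{i_0} \in \facet{L}$, and then $\sigma := \Psi^{*}_{i_0}$ together with $\tau := \chi_{i_0}$ satisfies $|\sigma \cap \tau| = d-1$. (If $\chi$ also happens to differ from $\Psi^{*}$ at some slot $j \notin S \cup \{i_0\}$, then $j \in \overline{\chi}$; combining $\overline{\chi} \in M$ with $S \setminus \{i_0\} \subset \overline{\chi}$ and the maximality of $S$ forces $i_0 \notin \overline{\chi}$, so $\chi_{i_0} \in \facet{L}$ still follows directly.)

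For the remaining case, where $\Psi^{*}$ is the first facet of the shelling, I would use that $M$ has at least two facets (because $M$ is not a simplex), select $S' \in \facet{M}$ with $S' \neq S$, and apply the same construction to $\Omega_{S'}$. If the corresponding minimum is not first, the previous argument with $S'$ concludes. Otherwise the first facet $F_1$ of the shelling lies in both $\Omega_S$ and $\Omega_{S'}$, so $S \cup S' \subset \overline{F_1} \in M$, contradicting the maximality of $S$ in $M$.

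The hardest step is the slot-by-slot analysis in the non-first case: ruling out every non-$\facet{L}$ possibility for $\chi_{i_0}$, and, as a secondary matter, controlling the auxiliary possibility that $\chi$ differs from $\Psi^{*}$ at slots outside $S \cup \{i_0\}$; both rely on $\overline{\chi} \in M$ combined with the maximality of $d$ and of $S$.
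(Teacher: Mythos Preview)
Your argument is correct and takes a genuinely different route from the paper's. The paper first invokes the rearrangement lemma (Lemma~\ref{rearrangement lemma}) to obtain a shelling with non-increasing facet sizes, then locates the first index $s$ for which $\overline{\phi^s}\not\subset\overline{\phi^1}$; the size ordering is used a posteriori to certify that the relevant slot $\phi^s_j$ has the maximal cardinality $d$. You instead build the constraint $|\Phi_j|=d$ for $j\in S$ directly into the definition of $\Omega_S$ and take the earliest such facet in an \emph{arbitrary} shelling, so the rearrangement lemma is never needed; the trade-off is that you must separately dispose of the edge case in which $\Psi^*$ is the very first facet, which you do cleanly via a second facet $S'$ of $M$ (this is exactly where ``$M$ is not a simplex'' enters your proof, whereas the paper uses that hypothesis up front to manufacture the index $s$). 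Both approaches ultimately apply Lemma~\ref{shellable definition} at a well-chosen facet and analyse the single slot where the earlier facet $\chi$ drops a vertex. Your parenthetical about $\chi$ possibly differing from $\Psi^*$ at slots outside $S$ is in fact superfluous---the three-case size analysis on $\chi_{i_0}$ already rules out $\chi_{i_0}\in\facet{K}\setminus\facet{L}$ regardless of what happens elsewhere---but it is not wrong.
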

\begin{proof}
Fix a shelling $\phi^1< \cdots <\phi^t$ on $\polyp$ such that $|\phi^p| \geq |\phi^q|$ for any $1 \leq p < q \leq t$, which exists by Lemma \ref{rearrangement lemma}. 

We first show that there exists an index $s$ such that $\overline{\phi^p} \nsupseteq \overline{\phi^s}$ for any $p \in [s-1]$. Since $M$ is not a simplex, there exists $j \in [m]$ such that $j \notin \overline{\phi^1}$. Take a facet $S \ni j$ of $M$. Here, it follows from $K \neq L$ that there exists a facet $\rho \in \facet{K} \setminus \facet{L}$ of $K$. Now, consider a subset $\psi \subset \bigsqcup_{i \in [m]} V(K)$ which satisfies
\begin{itemize}
\item if $i \in S$, then $\psi_i = \rho$,
\item if $i \notin S$, then $\psi_i \in \facet{L}$.
\end{itemize}
We have $\psi_i \in \facet{K} \cup \facet{L}$ for any $i \in [m]$, $\overline{\psi}= S \in M$ and $\overline{\psi} \cup \{i\} \notin M$ for any $i \notin S$ since $S$ is a facet of $M$. By Proposition \ref{facet description}, $\psi$ is a facet of $\polyp$. Therefore, we have $\psi= \phi^q$ for some $q \in [t]$. Since $j \in S \setminus \overline{\phi^1} = \overline{\phi^q} \setminus \overline{\phi^1}$, we get $\overline{\phi^q} \nsubseteq \overline{\phi^1}$. 
Now we set $s = \min \{r \in [t] \ |\ \overline{\phi^r} \nsubseteq \overline{\phi^1} \}$. It follows from the minimality of $s$ that $\overline{\phi^p} \subset \overline{\phi^1}$ for any $p \in [s-1]$. Thus, we get $\overline{\phi^s} \nsubseteq \overline{\phi^p}$ for any $p \in [s-1]$ as desired.

Now, take an index $s$ as above. 
For an index $p'$ such that $\overline{\phi^{p'}} = \overline{\phi^s}$, $p'$ must be larger than $s$, which implies $|\phi^{p'}| \leq |\phi^s|$. So, $\phi^s$ is the largest facet among facets $\phi$ such that $\overline{\phi} = \overline{\phi^s}$. 
By the definition of a shelling, there must exist $p$ with $1 \leq  p <s$ such that $|\phi^p \cap \phi^s| = |\phi^s| -1$. This equality implies that we have 
\begin{align*}
|\phi^p_i \cap \phi^s_i | \geq |\phi^s_i| -1
\end{align*}
for any $i \in [m]$.
On the other hand, it follows from $\overline{\phi^p} \nsupseteq \overline{\phi^s}$ that there exists $j \in \overline{\phi^s} \setminus \overline{\phi^p}$, namely $j \in [m]$ such that $\phi^p_j \in \facet{L}$ and $\phi^s_j \in \facet{K} \setminus \facet{L}$. 
Since $\phi^s_j$ is a facet of $K$, we have $\phi^p_j \cap \phi^s_j \subsetneq \phi^s_j$. Thus, we get
\begin{align*}
|\phi^p_j \cap \phi^s_j | = |\phi^s_j| -1.
\end{align*}
Assume that there exists $\mu \in \facet{K} \setminus \facet{L}$ such that $|\mu| > |\phi^s_j|$. Then, a subset $\chi \subset \bigsqcup_{i \in [m]} V(K)$ defined by 
\begin{align*}
\chi_i =\left\{
\begin{aligned}
& \phi^s_i & &(i \neq j) \\
& \mu & &(i =j)
\end{aligned} \right.
\end{align*}
is a facet of $\polyp$. Since $|\chi| > |\phi^s|$ and $\overline{\chi} = \overline{\phi^s}$, this contradicts to the maximality of $|\phi^s|$. Hence, we obtain $|\phi^s_j| = \max_{\rho \in \facet{K} \setminus \facet{L}} |\rho|$.

Then, $\sigma = \phi^s_j \in \facet{K} \setminus \facet{L}$ and $\tau = \phi^p_j \in \facet{L}$ satisfy
\begin{align*}
|\sigma \cap \tau | = |\sigma| -1 = \max_{\rho \in \facet{K} \setminus \facet{L}} |\rho| -1,
\end{align*}
which is the desired conclusion.
\end{proof}

\begin{example}
\label{necessary2 example}
Consider $M= \langle \{1\}, \{2\} \rangle$ and $(\underline{K}, \underline{L}) = \{(K_i,L_i)\}_{i=1,2}$ such that
\begin{align*}
K_i =\langle \{a_i,b_i\}, \{b_i, c_i\}, \{d_i\} \rangle , \ L_i =\langle \{c_i\}, \{d_i\} \rangle .
\end{align*}
For $\{a_i,b_i\} \in \facet{K_i} \setminus \facet{L_i}$, there exists no $\tau \in \facet{L_i}$ such that $|\{a_i,b_i\} \cap \tau| =1$. For $\{d_i\} \in \facet{L_i}$, there exists no $\sigma \in \facet{K_i} \setminus \facet{L_i}$ such that $|\sigma \cap \{d_i\}|=1$. On the other hand, we see that
\begin{align*}
\poly = &\langle \{a_1, b_1, c_2\}, \{b_1, c_1, c_2\}, \{b_1, c_1, d_2\}, \{a_1, b_1, d_2\}, \\
&\ \{c_1, b_2, c_2\}, \{d_1, b_2, c_2\}, \{d_1, a_2, b_2\}, \{c_1, a_2, b_2\}, \{d_1, d_2\} \rangle
\end{align*}
(see Figure \ref{necessary2 example figure}) is shellable. So, Theorem \ref{necessary2} only guarantees the existence of at least one pair $(\sigma, \tau)$, in this example, $\sigma=\{b_i, c_i \}$ and $\tau = \{c_i\}$.

\begin{figure}[tb]
\begin{tabular}{ccccc}
\begin{tikzpicture}[scale=0.8]
\draw (1,2)--(1,4);
\node at (1.5,2.5) {$*$};
\node at (1,1) [vertex] {}; \node at (1,1) [left] {$d_1$};
\node at (1,2) [vertex] {}; \node at (1,2) [left] {$c_1$};
\node at (1,3) [vertex] {}; \node at (1,3) [left] {$b_1$};
\node at (1,4) [vertex] {}; \node at (1,4) [left] {$a_1$};
\node at (2,2) [vertex] {}; \node at (2,2) [right] {$c_2$};
\node at (2,1) [vertex] {}; \node at (2,1) [right] {$d_2$};
\end{tikzpicture}
& &
\begin{tikzpicture}[scale=0.8]
\node at (1,1) {}; \node at (1,2.5) {$\cup$}; \node at (1,4) {};
\end{tikzpicture}
& &
\begin{tikzpicture}[scale=0.8]
\draw (2,2)--(2,4);
\node at (1.5,2.5) {$*$};
\node at (1,1) [vertex] {}; \node at (1,1) [left] {$d_1$};
\node at (1,2) [vertex] {}; \node at (1,2) [left] {$c_1$};
\node at (2,3) [vertex] {}; \node at (2,3) [right] {$b_2$};
\node at (2,4) [vertex] {}; \node at (2,4) [right] {$a_2$};
\node at (2,2) [vertex] {}; \node at (2,2) [right] {$c_2$};
\node at (2,1) [vertex] {}; \node at (2,1) [right] {$d_2$};
\end{tikzpicture}
\end{tabular}
\caption{$\poly$ in Example \ref{necessary2 example}}
\label{necessary2 example figure}
\end{figure}
\end{example}

\begin{remark}
\label{necessary2 remark}
The converse of Theorem \ref{necessary1} and the converse of Theorem \ref{necessary2} do not hold. Consider $M= \langle \{1\}, \{2\} \rangle$ and $(\underline{K}, \underline{L}) = \{(K_i,L_i)\}_{i=1,2}$ such that
\begin{align*}
K_i =\langle \{a_i, b_i, c_i \}, \{a_i ,c_i ,d_i\}, \{b_i, d_i \} \rangle , \ L_i =\langle \{a_i, b_i, c_i \}, \{b_i, d_i \} \rangle .
\end{align*}
Two facets $\{a_i, c_i, d_i\} \in \facet{K_i} \setminus \facet{L_i}$ and $\{a_i,b_i,c_i\} \in \facet{L_i}$ satisfy the condition in Theorem \ref{necessary2}. Moreover, both $K=K_1=K_2$ and $L=L_1=L_2$ are shellable and $\facet{L} \subset \facet{K}$. However, 
\begin{align*}
\poly = &\langle \{a_1, b_1, c_1, a_2, b_2, c_2\}, \{a_1, c_1, d_1,a_2, b_2, c_2\}, \\
&\ \{a_1, b_1, c_1, a_2, c_2, d_2\}, \{a_1, b_1, c_1, b_2, d_2\}, \{a_1, c_1, d_1, b_2, d_2\}, \\
&\ \{b_1, d_1, a_2, b_2, c_2\}, \{b_1, d_1, a_2, c_2, d_2\}, \{b_1, d_1, b_2, d_2\} \rangle
\end{align*}
(see Figure \ref{necessary2 remark figure}) is not shellable. To see this, assume that $\poly$ is shellable and $\{a_1, c_1, d_1, b_2, d_2\} < \{b_1, d_1, a_2, c_2, d_2\}$ in a shelling. There must exists a facet $\phi \neq \{b_1, d_1, a_2, c_2, d_2\}$ of $\poly$ and $x \in \{b_1, d_1, a_2, c_2, d_2\} \setminus \{a_1, c_1, d_1, b_2, d_2\} = \{b_1, a_2, c_2\}$ such that 
$\{b_1, d_1, a_2, c_2, d_2\} \setminus \{x\} \subset \phi$. 
However, there is no facet $\phi \neq \{b_1, d_1, a_2, c_2, d_2\}$ such that $\phi$ includes $\{ d_1, a_2, c_2, d_2\}, \{b_1, d_1, c_2, d_2\}$ or $\{b_1, d_1, a_2, d_2\}$. This is a contradiction. In the same way, we also get $ \{b_1, d_1, a_2, c_2, d_2\} \nleq \{a_1, c_1, d_1, b_2, d_2\}$. So, $\poly$ has no shelling.

\begin{figure}[tb]
\begin{tabular}{ccc}
\begin{tikzpicture}[scale=0.8]
\node at (2,2) [vertex] {}; \node at (2,2) [below] {$c_1$};
\node at (2,3) [vertex] {}; \node at (2,3) [above] {$a_1$};
\node at (1.2, 1.5) [vertex] {}; \node at (1.2,1.5) [left] {$b_1$};
\node at (2.8,1.5) [vertex] {}; \node at (2.8,1.5) [right] {$d_1$};
\draw (2,2)--(2,3)--(1.2,1.5)--cycle (2,2)--(2,3)--(2.8,1.5)--cycle (1.2,1.5)--(2.8,1.5);
\fill [opacity=0.3] (2,2)--(2,3)--(1.2,1.5)--cycle (2,2)--(2,3)--(2.8,1.5)--cycle;
\node at (3.5,2.25) {$*$};
\node at (5,2) [vertex] {}; \node at (5,2) [below] {$c_2$};
\node at (5,3) [vertex] {}; \node at (5,3) [above] {$a_2$};
\node at (4.2, 1.5) [vertex] {}; \node at (4.2,1.5) [left] {$b_2$};
\node at (5.8,1.5) [vertex] {}; \node at (5.8,1.5) [right] {$d_2$};
\draw (5,2)--(5,3)--(4.2,1.5)--cycle (4.2,1.5)--(5.8,1.5);
\fill [opacity=0.3] (5,2)--(5,3)--(4.2,1.5)--cycle;
\end{tikzpicture}
&
\begin{tikzpicture}[scale=0.8]
\node at (1,1.5) {}; \node at (1,2.25) {$\cup$}; \node at (1,3) {};
\end{tikzpicture}
&
\begin{tikzpicture}[scale=0.8]
\node at (2,2) [vertex] {}; \node at (2,2) [below] {$c_1$};
\node at (2,3) [vertex] {}; \node at (2,3) [above] {$a_1$};
\node at (1.2, 1.5) [vertex] {}; \node at (1.2,1.5) [left] {$b_1$};
\node at (2.8,1.5) [vertex] {}; \node at (2.8,1.5) [right] {$d_1$};
\draw (2,2)--(2,3)--(1.2,1.5)--cycle (1.2,1.5)--(2.8,1.5);
\fill [opacity=0.3] (2,2)--(2,3)--(1.2,1.5)--cycle ;
\node at (3.5,2.25) {$*$};
\node at (5,2) [vertex] {}; \node at (5,2) [below] {$c_2$};
\node at (5,3) [vertex] {}; \node at (5,3) [above] {$a_2$};
\node at (4.2, 1.5) [vertex] {}; \node at (4.2,1.5) [left] {$b_2$};
\node at (5.8,1.5) [vertex] {}; \node at (5.8,1.5) [right] {$d_2$};
\draw (5,2)--(5,3)--(4.2,1.5)--cycle (5,2)--(5,3)--(5.8,1.5)--cycle (4.2,1.5)--(5.8,1.5);
\fill [opacity=0.3] (5,2)--(5,3)--(4.2,1.5)--cycle (5,2)--(5,3)--(5.8,1.5)--cycle;
\end{tikzpicture}
\end{tabular}
\caption{$\poly$ in Remark \ref{necessary2 remark}}
\label{necessary2 remark figure}
\end{figure}
\end{remark}

\begin{corollary}
\label{dimension corollary}
Let $M$ be a simplicial complex and $K \supset L$ be a pair of simplicial complexes. If $\dim K - \dim L \geq 2$, then $\polyp$ is not shellable. 

In particular, for any simplicial complex $M$ and $K$ such that $\dim K \geq 1$, $\mathcal{Z}^*_M (K, \{\emptyset\})$ is not shellable.
\end{corollary}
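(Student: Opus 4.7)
The plan is to derive a contradiction via Theorem~\ref{necessary2}, assuming $\polyp$ is shellable. As with that theorem, we implicitly take $M$ not to be a simplex (otherwise $\polyp$ is an $m$-fold join of $K$ with itself and is shellable whenever $K$ is, so the statement fails in that degenerate case). Since $\dim K - \dim L \geq 2$, in particular $K \supsetneq L$, so all hypotheses of Theorem~\ref{necessary2} are in place.

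The key arithmetic step is to evaluate $\max_{\rho \in \facet{K} \setminus \facet{L}} |\rho|$. Any $\rho \in K$ of cardinality $\dim K + 1$ is automatically a facet of $K$ by maximality of its cardinality, and cannot lie in $L$ because $|\rho| = \dim K + 1 > \dim L + 1$; hence such $\rho$ belongs to $\facet{K} \setminus \facet{L}$, so the maximum equals $\dim K + 1$. Theorem~\ref{necessary2} then produces $\sigma \in \facet{K} \setminus \facet{L}$ and $\tau \in \facet{L}$ with $|\sigma \cap \tau| = \dim K$. But $\sigma \cap \tau \subset \tau$ forces $|\sigma \cap \tau| \leq |\tau| \leq \dim L + 1$, and combining these inequalities gives $\dim K \leq \dim L + 1$, contradicting $\dim K - \dim L \geq 2$.

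The ``in particular'' clause is immediate: the complex $\{\emptyset\}$ has dimension $-1$, so the hypothesis $\dim K - \dim \{\emptyset\} \geq 2$ is equivalent to $\dim K \geq 1$. No serious obstacle is expected, since all the shellability-specific content is already packaged inside Theorem~\ref{necessary2}; the argument here is essentially a dimension count. The only subtle point is the implicit requirement that $M$ not be a simplex, inherited from the invocation of Theorem~\ref{necessary2}, which I would flag at the start of the proof.
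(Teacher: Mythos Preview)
Your argument is correct and essentially identical to the paper's: both compute $\max_{\rho \in \facet{K}\setminus\facet{L}}|\rho|=\dim K+1$ via a top-dimensional facet, then bound $|\sigma\cap\tau|\le\dim L+1$ to contradict Theorem~\ref{necessary2}. Your explicit remark that $M$ must not be a simplex is a valid caveat the paper's statement and proof leave implicit.
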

\begin{proof}
It follows from $\dim K > \dim L$ that there must be $\rho \in \facet{K}$ such that $|\rho| = \dim K +1$ and $\rho \notin \facet{L}$. 
So, we get
\begin{align*}
\dim K = \max_{\rho \in \facet{K} \setminus \facet{L}} |\rho| -1 .
\end{align*}

For any $\sigma \in \facet{K} \setminus \facet{L}$ and $\tau \in \facet{L}$, we have
\begin{align*}
|\sigma \cap \tau| \leq |\tau| \leq \dim L +1 \leq \dim K -1 = \max_{\rho \in \facet{K} \setminus \facet{L}} |\rho| -2 .
\end{align*}
Therefore, there is no pair $(\sigma, \tau)$ of $\sigma \in \facet{K} \setminus \facet{L}$ and $\tau \in \facet{L}$ such that 
\begin{align*}
|\sigma \cap \tau| = \max_{\rho \in \facet{K} \setminus \facet{L}} |\rho| -1 .
\end{align*}
By Theorem \ref{necessary2}, we conclude that $\polyp$ is not shellable.
\end{proof}

As the last result in this section, we prove that under a certain condition, the shellability of both $K$ and $L$ is equivalent to the shellability of $\polyp$. 
\begin{theorem}
\label{equivalent}
Let $M$ be a simplicial complex which is not a simplex and $K$ be a simplicial complex. For a vertex $v_0$ of $K$, suppose that $\facet{\dl{K}{v_0}} \subset \facet{K}$. Then $\mathcal{Z}^*_M (K, \dl{K}{v_0})$ is shellable if and only if both $K$ and $\dl{K}{v_0}$ are shellable.
\end{theorem}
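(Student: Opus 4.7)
The equivalence splits into two directions. The necessity direction follows immediately from Theorem \ref{necessary1} applied with $L := \dl{K}{v_0}$: the vertex $v_0$ gives $\{v_0\} \in K \setminus L$, hence $K \supsetneq L$, so all hypotheses of Theorem \ref{necessary1} are met and both $K$ and $L$ are shellable.

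For the sufficiency direction, I plan to verify the conditions of Theorem \ref{sufficient}; conditions (1) and (2) are given and the work is in condition (3). Under $\facet{L} \subset \facet{K}$ a short check shows that $\facet{L}$ equals the set of $v_0$-free facets of $K$, while $\facet{K} \setminus \facet{L}$ equals the set of $v_0$-containing facets. I will build the required shelling of $K$ by concatenating a chosen shelling $\tau_1, \ldots, \tau_s$ of $L$ with the subsequence $\sigma_1, \ldots, \sigma_r$ of $v_0$-containing facets extracted from any shelling of $K$, in the order inherited from that shelling.

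Verification proceeds via Lemma \ref{shellable definition} in two steps. First I will show that $\sigma_1, \ldots, \sigma_r$ alone is a shelling of $N := \langle \sigma_1, \ldots, \sigma_r \rangle$: for $p < q$ and reference $\sigma_p$, the parent $K$-shelling delivers a facet $F$ with $F \cap \sigma_q = \sigma_q \setminus \{x\}$ and $x \in \sigma_q \setminus \sigma_p$; because $v_0 \in \sigma_p$ forces $x \neq v_0$, we then get $v_0 \in \sigma_q \setminus \{x\} \subset F$, so $F$ itself contains $v_0$ and hence is some earlier $\sigma_j$. Next I will verify Lemma \ref{shellable definition} for the whole concatenation: pairs of $\tau_i$'s are handled by the $L$-shelling, pairs of $\sigma_k$'s by the $N$-shelling just established, and the cross case (earlier $\tau_i$, later $\sigma_k$) is handled by taking $x = v_0 \in \sigma_k \setminus \tau_i$ together with any facet $\tau \in \facet{L}$ containing the simplex $\sigma_k \setminus \{v_0\} \in L$. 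Such a $\tau$ exists because every simplex of $L$ sits in some facet, and the computation $\tau \cap \sigma_k = \sigma_k \setminus \{v_0\}$ uses only $v_0 \notin \tau$ and $\sigma_k \setminus \{v_0\} \subset \tau$.

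The step I expect to demand the most care is the first verification: showing that the subsequence of $v_0$-containing facets in a $K$-shelling is itself a shelling of $N$. The argument hinges on the ``parity'' observation that if both $\sigma_p$ and $\sigma_q$ contain $v_0$, the witnessing facet $F$ from the $K$-shelling must also contain $v_0$. Once the concatenation is verified to be a shelling of $K$ satisfying condition (3), Theorem \ref{sufficient} applied to it immediately yields that $\mathcal{Z}^*_M(K, \dl{K}{v_0})$ is shellable.
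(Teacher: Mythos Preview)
Your proposal is correct and follows essentially the same route as the paper: for necessity you invoke Theorem~\ref{necessary1}, and for sufficiency you concatenate a shelling of $L=\dl{K}{v_0}$ with the $v_0$-containing facets in the order inherited from a shelling of $K$, verify via Lemma~\ref{shellable definition} that this is a shelling of $K$ satisfying condition~(3), and then apply Theorem~\ref{sufficient}. The only cosmetic difference is in the $\sigma$--$\sigma$ case: you prove the slightly stronger fact that the witness $F$ must itself contain $v_0$, whereas the paper simply observes that whether $F$ lies in $\facet{L}$ or in $\facet{K}\setminus\facet{L}$ it precedes $\sigma_q$ in the concatenated order.
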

\begin{proof}
We set $L = \dl{K}{v_0}$.
If $\mathcal{Z}^*_M (K, L)$ is shellable and $\facet{L} \subset \facet{K}$, then by Theorem \ref{necessary1}, $K$ and $L$ are shellable. In order to prove the converse, it is sufficient to show that there is a shelling $<_K$ on $K$ such that $\tau <_K \sigma$ for any $\tau \in \facet{L}$ and any $\sigma \in \facet{K} \setminus \facet{L}$. Then the proof is completed by Theorem \ref{sufficient}.

Let $<$ be a shelling on $K$ and $<'$ be a shelling on $L$. Define a relation $<_K$ on $\facet{K}$ by
\begin{itemize}
\item $\tau <_K \sigma$ for any $\tau \in \facet{L}$ and $\sigma \in \facet{K} \setminus \facet{L}$,
\item for any $\tau, \tau' \in \facet{L}$, $\tau <_K \tau'$ if and only if $\tau <' \tau'$, and
\item for any $\sigma, \sigma' \in \facet{K} \setminus \facet{L}$, $\sigma <_K \sigma'$ if and only if $\sigma < \sigma'$.
\end{itemize}
It is obvious that $<_K$ is a linear order. We prove that $<_K$ is a shelling on $K$. The goal of the proof is to show that for any $\rho, \rho' \in \facet{K}$ such that $\rho <_K \rho'$, there exists $\rho'' \in \facet{K}$ and $u \in \rho' \setminus \rho$ which satisfy $\rho'' <_K \rho'$ and $\rho'' \cap \rho' = \rho' \setminus \{u\}$.

For any $\tau, \tau' \in \facet{L}$ such that $\tau <' \tau'$, there exists $\tau'' \in \facet{L}$ and $x \in \tau' \setminus \tau$ such that $\tau'' <' \tau'$ and $\tau'' \cap \tau' = \tau' \setminus \{x\}$. Since $\tau'' \in \facet{L}$ and $\tau'' <' \tau'$, we get $\tau'' <_K \tau'$.

For any $\sigma, \sigma' \in \facet{K} \setminus \facet{L}$ such that $\sigma < \sigma'$, there exists $\rho \in \facet{K}$ and $x \in \sigma' \setminus \sigma$ such that $\rho < \sigma'$ and $\rho \cap \sigma' = \sigma' \setminus \{x\}$. If $\rho \in \facet{L}$, then we obtain $\rho <_K \sigma'$ since $\sigma' \in \facet{K} \setminus \facet{L}$. If $\rho \in \facet{K} \setminus \facet{L}$, then we obtain $\rho <_K \sigma'$ again since $\rho < \sigma'$. In both cases, we have $\rho <_K \sigma'$.

For $\tau \in \facet{L}$ and $\sigma \in \facet{K} \setminus \facet{L}$, we have $v_0 \in \sigma \setminus \tau$ since $\sigma \notin \dl{K}{v_0}$ and $\tau \in \dl{K}{v_0}$. A simplex $\sigma \setminus \{v_0\} \in L$ is not a facet of $L$ since $\sigma \setminus \{v_0\}$ is not a facet of $K$ and we have $\facet{L} \subset \facet{K}$ by the assumption. Therefore, there exists a facet $\rho \in \facet{L}$ such that $\rho \supsetneq \sigma \setminus \{v_0\}$. Since $\rho \in \dl{K}{v_0}$, we obtain $\rho \cap \sigma = \sigma \setminus \{v_0\}$.
We also get $\rho <_K \sigma$ because $\rho \in \facet{L}$ and $\sigma \in \facet{K} \setminus \facet{L}$.
\end{proof}

\section{Applications to the shellability of graphs}
\label{applications}
In this section, we prove Theorem \ref{main theorem}. Here we introduce a class of graphs which are constructed from two graphs $G, H$ and a subset $U \subset V(H)$. 
In the following, a set $\{u,v\} \subset V(G)$ of two vertices of a graph $G$ is denoted by $uv$.

\begin{definition}
Let $G, H$ be graphs and $U \subset V(H)$ be a subset of $V(H)$. We define a graph $G[H;U]$ by
\begin{align*}
V(G[H;U]) &= V(G) \times V(H) , \\
E(G[H;U]) &= \left\{(u_1,v_1)(u_2,v_2) \ \middle|\ \left.
\begin{aligned}
&u_1 = u_2 \text{ and } v_1 v_2 \in E(H) ,\\
&\text{ or } \\
&u_1 u_2 \in E(G) \text{ and } v_1, v_2 \in U
\end{aligned} \right. \right\}.
\end{align*}
\end{definition}

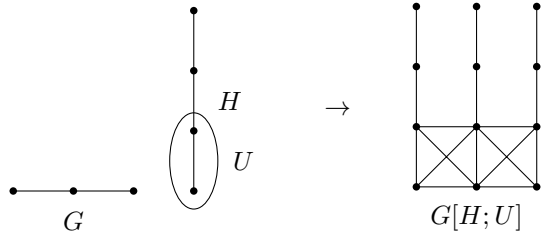
\begin{figure}[tb]
\label{definition figure}
\begin{tabular}{ccccc}
\begin{tikzpicture}[scale=0.8]
\node at (2,0.5) {$G$};
\node at (4.6,2.5) {$H$};
\foreach \x in {1,2,3}  {\node at (\x, 1) [vertex] {};};
\foreach \y in {1,2,3,4} {\node at (4,\y) [vertex] {};};
\draw (1,1)--(3,1) (4,1)--(4,4);
\draw (4,1.5) circle (0.4cm and 0.8cm);
\node at (4.5,1.5) [right] {$U$};
\end{tikzpicture}
& &
\begin{tikzpicture}[scale=0.8]
\node at (1,0) {}; \node at (1,2) {$\rightarrow$}; \node at (1,4) {};
\end{tikzpicture}
& &
\begin{tikzpicture}[scale=0.8]
\foreach \x in {1,2,3} { \foreach \y in {1,2,3,4} {\node at (\x, \y) [vertex] {};};};
\draw (1,1)--(1,4) (2,1)--(2,4) (3,1)--(3,4) (1,1)--(3,1) (1,2)--(3,2) (1,1)--(2,2)--(3,1) (1,2)--(2,1)--(3,2);
\node at (2,0.5) {$G[H;U]$};
\end{tikzpicture}
\end{tabular}
\caption{An example of $G[H;U]$}
\end{figure}

\begin{example}
\label{lexicographic product}
For graphs $G$ and $H$, $G[H; V(H)]$ is the {\it lexicographic product} $G[H]$. The definition of the lexicographic product of two graphs is given in, for example, \cite{Harary69}.
\end{example}

\begin{example}
Let $G, H$ be graphs and $v_0$ be a vertex of $H$. As defined in Section \ref{introduction}, $G[H; \{v_0\}]$ is the graph obtained from $G \sqcup \left( \bigsqcup_{u \in V(G)} H_u \right)$ by identifying $u \in V(G)$ with $v_0 \in H_u$, where $H_u$ is a copy of $H$.
\end{example}

The independence complex of $G[H;U]$ is described as a polyhedral join, stated in the following proposition.
\begin{proposition}
\label{independence complex}
Let $G, H$ be graphs and $U \subset V(H)$ be a subset of $V(H)$. Then we have
\begin{align*}
I(G[H;U]) = \polyind,
\end{align*}
where $H \setminus U$ is a graph defined by $V(H \setminus U)=V(H) \setminus U$ and $E(H \setminus U)=\{uv \in E(H) \ |\ u, v \notin U\}$.
\end{proposition}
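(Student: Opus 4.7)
The plan is to apply Proposition \ref{alternative definition} and match the combinatorial conditions on both sides. Identify $V(G[H;U]) = V(G) \times V(H)$ with $\bigsqcup_{u \in V(G)} V(H_u)$, where $V(H_u)$ is a copy of $V(H)$ indexed by $u \in V(G)$. For any $\phi \subset V(G[H;U])$, let $\phi_u = \{v \in V(H) : (u,v) \in \phi\}$, which corresponds to $\phi \cap V(H_u)$ in the setup of Proposition \ref{alternative definition} (with $M = I(G)$ on $V(G)$ and each pair $(K_u, L_u) = (I(H), I(H \setminus U))$).

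The first step is to note that, as a subcomplex of $I(H)$, we have $I(H \setminus U) = \{\sigma \in I(H) : \sigma \cap U = \emptyset\}$, because $H \setminus U$ is the induced subgraph of $H$ on $V(H) \setminus U$, so a subset of $V(H) \setminus U$ is independent in $H \setminus U$ iff it is independent in $H$. Consequently, assuming $\phi_u \in I(H)$ for each $u$, the set $\overline{\phi}$ appearing in Proposition \ref{alternative definition} becomes $\overline{\phi} = \{u \in V(G) : \phi_u \cap U \neq \emptyset\}$.

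Next I would unpack the definition of $E(G[H;U])$ and translate $\phi \in I(G[H;U])$ into two clauses by splitting on whether two vertices of $\phi$ share a first coordinate: (i) for each $u \in V(G)$, no two elements of $\phi_u$ are adjacent in $H$, that is $\phi_u \in I(H)$; (ii) for each edge $u_1 u_2 \in E(G)$ there are no $v_1 \in \phi_{u_1}, v_2 \in \phi_{u_2}$ with $v_1, v_2 \in U$, i.e.\ at least one of $\phi_{u_1} \cap U$ and $\phi_{u_2} \cap U$ is empty. Clause (ii) is precisely the statement that $\overline{\phi}$ is an independent set of $G$, i.e.\ $\overline{\phi} \in I(G)$. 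Combining (i) and (ii) with Proposition \ref{alternative definition} yields the equality $I(G[H;U]) = \polyind$.

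The argument is essentially bookkeeping; the single substantive observation is the identification $I(H \setminus U) = \{\sigma \in I(H) : \sigma \cap U = \emptyset\}$, which is what makes the edge condition of $G[H;U]$ across different $G$-vertices line up with the condition $\overline{\phi} \in I(G)$. I anticipate no real obstacle beyond keeping the two parallel descriptions of ``$\phi_u$ hits $U$'' and ``$\phi_u \notin I(H \setminus U)$'' carefully aligned.
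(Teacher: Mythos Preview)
Your proposal is correct and follows essentially the same approach as the paper: both identify $\phi_u$ and $\overline{\phi}$ in this setting, use the observation that $I(H\setminus U)=\{\sigma\in I(H):\sigma\cap U=\emptyset\}$ to rewrite $\overline{\phi}=\{u:\phi_u\cap U\neq\emptyset\}$, and then match the two edge clauses of $G[H;U]$ with the two conditions of Proposition~\ref{alternative definition}. The only cosmetic difference is that the paper verifies the two inclusions separately while you package them as a single biconditional translation.
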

\begin{proof}
Remark that for $\phi \subset V(G) \times V(H) = V(\polyind)$, $\phi_u$ ($u \in V(G)$) and $\overline{\phi}$ in Proposition \ref{alternative definition} are reformulated as follows:
\begin{align*}
\phi_u &= \{ v \in V(H) \ |\ (u, v) \in \phi \}, \\
\overline{\phi} &= \{ u \in V(G) \ |\ \phi_u \cap U \neq \emptyset \}.
\end{align*}

Let $\phi \in I(G[H;U])$ be an independent set of $G[H;U]$. Then, we have the followings.
\begin{itemize}
\item For any $u \in V(G)$ and $v_1, v_2 \in V(H)$, suppose that $v_1, v_2 \in \phi_u$. Then we have $(u, v_1)(u, v_2) \notin E(G[H;U])$ since $(u,v_1), (u, v_2) \in \phi$ and $\phi$ is an independent set of $G[H;U]$. By the definition of $E(G[H;U])$, we obtain $v_1 v_2 \notin E(H)$. Therefore, $\phi_u$ is an independent set of $H$, namely $\phi_u \in I(H)$.
\item For any $u_1, u_2 \in \overline{\phi}$, there exist $w_1, w_2 \in U$ such that $(u_1, w_1), (u_2, w_2) \in \phi$. Then $(u_1, w_1)(u_2, w_2) \notin E(G[H;U])$ since $\phi$ is an independent set of $G[H;U]$. By the definition of $E(G[H;U])$, we obtain $u_1 u_2 \notin E(G)$. Therefore, $\overline{\phi}$ is an independent set of $G$, namely $\overline{\phi} \in I(G)$.
\end{itemize}
So, by Proposition \ref{alternative definition}, $\phi$ is a simplex of $\polyind$.

Conversely, let $\psi$ be a simplex of $\polyind$. It follows from Proposition \ref{alternative definition} that  we have
$\psi_u \in I(H)$ for any $u \in V(G)$ and $\overline{\psi} \in I(G)$. Then, for any $(u_1, v_1), (u_2, v_2) \in \psi$, we have the followings.
\begin{itemize}
\item If $u_1 = u_2$, then $v_1 v_2 \notin E(H)$ since $v_1, v_2 \in \psi_{u_1}$ and $\psi_{u_1}$ is an independent set of $H$. Furthermore, $u_1 = u_2$ implies that $u_1 u_2 \notin E(G)$ since $G$ has no loops. Therefore, we get $(u_1, v_1)(u_2, v_2) \notin E(G[H;U])$.
\item If $u_1 \neq u_2$ and $v_1, v_2 \in U$, then $u_1 u_2 \notin E(G)$ since $u_1, u_2 \in \overline{\psi}$ and $\overline{\psi}$ is an independent set of $G$. Therefore, we get $(u_1, v_1)(u_2, v_2) \notin E(G[H;U])$.
\item If $u_1 \neq u_2$ and $v_1 \notin U$ or $v_2 \notin U$, then $(u_1, v_1)(u_2, v_2) \notin E(G[H;U])$.
\end{itemize}
So, we conclude that $(u_1, v_1)(u_2, v_2) \notin E(G[H;U])$. Thus, $\psi$ is an independent set of $G[H;U]$, namely $\psi \in I(G[H;U])$.
\end{proof}

\begin{example}
Let $G$ be a graph with at least one edge and $H$ be a graph which is not a complete graph. 
Vander Meulen and Van Tuyl \cite[Theorem 2.3]{VandermeulenVantuyl17} proved that $I(G[H])$ is not shellable.
We can deduce this result from Example \ref{lexicographic product}, Proposition \ref{independence complex} and Corollary \ref{dimension corollary} since $I(G)$ is not a simplex and $I(H)$ is not $0$-dimensional.
\end{example}

Now we are ready to prove Theorem \ref{main theorem}.
\begin{proof}[Proof of Theorem \ref{main theorem}]
First, we show that (1) implies (2). By Proposition \ref{independence complex}, (1) is equivalent to the condition that $\polyindv$ is pure and shellable for any graph $G$. It follows from Theorem \ref{pure} that $I(H)$ is pure, namely $H$ is well-covered. Now consider the cycle on $4$ vertices $C_4$, namely the graph defined by
\begin{align*}
V(C_4) =\{1,2,3,4\},&  &E(C_4)= \{12, 23, 34, 41\}.
\end{align*}
Since $I(C_4) = \langle \{1,3\}, \{2,4\} \rangle$ and $I(C_4[H, H \setminus \{v_0\}])$ is shellable, Claim \ref{nonshellable} indicates that $\facet{I(H \setminus \{v_0\})} \subset \facet{I(H)}$. Hence, for any maximal independent set $\tau$ of $H \setminus \{v_0\}$, we get that $\tau \cup \{v_0\}$ is not an independent set of $H$. Namely there exists $v \in \tau$ such that $v_0 v \in E(H)$. Therefore, by Theorem \ref{necessary1}, both $I(H)$ and $I(H \setminus \{v_0\})$ are shellable.

Next, we deduce (1) from (2). By the conditions in (2), $I(H)$ is pure, both $I(H)$ and $I(H \setminus \{v_0\})$ are shellable, and $\facet{I(H \setminus \{v_0\})} \subset \facet{I(H)}$. Therefore, it follows from Theorem \ref{pure} that $I(G[H;H\setminus \{v_0\}])$ is pure for any graph $G$ and from Theorem \ref{equivalent} that $I(G[H; H \setminus \{v_0\}])$ is shellable for any graph $G$ which has at least one edge. For graph $G$ which has no edges, it follows from Lemma \ref{join shellable} that
\begin{align*}
\polyindv = \underbrace{I(H) * \cdots *I(H)}_{|V(G)|}
\end{align*}
is shellable since $I(H)$ is shellable. Therefore, we conclude that $\polyindv$ is pure and shellable, namely $G[H;\{v_0\}]$ is well-covered and shellable, for any graph $G$.
\end{proof}

\begin{example}
If $H$ is a complete graph, then $H$ satisfies the condition (2) in Theorem \ref{main theorem}.

An example of $H$ which is not a complete graph is $C_5$, a cycle of length $5$. Let $V(C_5)=\{a, b, c, d, e\}$ and $E(C_5)=\{ab, bc, cd, de, ea\}$. Then 
\begin{align*}
I(C_5) = \langle \{a,c\},\{b,d\},\{c,e\},\{d,a\},\{e,b\} \rangle
\end{align*}
is pure and shellable. Furthermore, 
\begin{align*}
\dl{C_5}{a} = \langle \{b,d\}, \{e,b\}, \{c,e\} \rangle
\end{align*}
is shellable and each facet of $\dl{C_5}{a}$ contains $b$ or $e$, which are adjacent to $a$.
\end{example}

\end{document}